\documentclass[11pt]{article}

\usepackage{amsmath,amsthm,amscd,latexsym}
\usepackage{amsfonts,pdfsync,color,graphicx}
\usepackage[psamsfonts]{amssymb}
\usepackage{epsfig}
\usepackage{color}
\usepackage[noadjust]{cite}
\usepackage{accents}
\usepackage{caption}
\usepackage{listings}
\usepackage{multicol}
\usepackage{float}
\usepackage{hyperref}
\usepackage{tikz-cd}
\usepackage{subfigure}

\usepackage{listings}
\usepackage{xcolor}

\lstdefinestyle{mathematica}{
	language=[Mathematica]Mathematica,
	basicstyle=\ttfamily\small,
	keywordstyle=\color{blue}\bfseries,
	commentstyle=\color{green!50!black}\itshape,
	stringstyle=\color{orange},
	numbers=left,
	numberstyle=\tiny\color{gray},
	stepnumber=1,
	numbersep=5pt,
	frame=single,
	breaklines=true,
	tabsize=2,
	showstringspaces=false
}

\lstset{
	language=Python,                
	basicstyle=\ttfamily\small,     
	keywordstyle=\color{blue},      
	commentstyle=\color{green!50!black}, 
	stringstyle=\color{red},        
	numbers=left,                   
	numberstyle=\tiny,              
	stepnumber=1,                   
	frame=lines,                    
	tabsize=4,                      
	breaklines=true,                
	breakatwhitespace=false,        
	showspaces=false,               
	showtabs=false,                 
	morekeywords={lambda},          
}

\usepackage[top=1.5in,bottom=1.5in,left=1.3in,right=1.0in]{geometry}

\newtheoremstyle{mystyle}               
{}                
{}                
{}        
{}                
{\bfseries \itshape}       
{.}      
{ }      
{}       

\newtheorem{theorem}{Theorem}[section]
\newtheorem{proposition}[theorem]{Proposition}
 
\newtheorem{corollary}[theorem]{Corollary}
\theoremstyle{definition}
\newtheorem{definition}[theorem]{Definition}
\newtheorem{example}[theorem]{Example}	
\newtheorem{conjecture}[theorem]{Conjecture}	
\theoremstyle{mystyle}
\newtheorem{remark}[theorem]{Remark}
\numberwithin{equation}{section}

\newcommand{\norm}[1]{\left\lVert #1 \right\rVert}

\title{Second order periodic boundary value problems with reflection and piecewise constant arguments}
\date{ }
\author{Alberto Cabada and Paula Cambeses-Franco\\
	$^1$ CITMAga, 15782, Santiago de Compostela, Galicia, Spain\\
	$^2$ Departamento de Estatística, Análise Matemática e Optimización\\
	Facultade de Matem\'aticas, Universidade de Santiago de Com\-pos\-te\-la, Spain.\\
	alberto.cabada@usc.es; paula.cambeses.franco@usc.es}
\begin{document}
	\maketitle
	\begin{abstract}
In this paper, we analyze a second-order differential equation with a piecewise constant argument and reflection coupled to periodic boundary conditions. Our main contribution is the construction of the related Green's function and a detailed analysis of its properties. In particular, we determine the region in which the Green's function has constant sign, depending on the parameters $m$ and $M$ on which it depends. In some cases, we are able to characterize these parameter values in terms of the first eigenvalue related to suitable Dirichlet problems. Building in these results, we apply the Krasnosel'skii method to establish the existence of solutions for different nonlinear problems, and prove the existence of a positive solution of a perturbed Schrodinger equation.
		
	\end{abstract}

	\noindent{\bf AMS Subject Classifications:}  34B05, 34B08, 34B10, 34B15, 34B18, 34B27.

	\noindent{\bf Keywords:} Green’s function, equation with reflection, piecewise constant arguments, constant sign, first eigenvalue, Schrodinger equation.

\section{Introduction}
\label{sec:1}

Differential equations with piecewise constant arguments and involution equations have attracted significant attention due to their diverse applications. Piecewise constant argument models are used to describe systems where variables and/or some of their derivatives are evaluated at discrete intervals. These models are particularly relevant in fields like control engineering and biomedicine. Involution equations, often involving reflection, also appear in biomedicine and physical phenomena, such as the heat equation for bent rods.

The study of involution equations date back to Silberstein's in $1940$, \cite{silberstein1940xvii}. Over time, these equations have been explored in numerous works \cite{aftabizadeh1988bounded,hendersonnontrivial,o1994existence,cabada2015differential}. Similarly, research on differential equations with piecewise constant arguments emerged in the $1980$s, blending methods from both differential and difference equations to address their challenges \cite{myshkis1977certain,torres2024oscillations}.

Our work combines the study of these two types of equations, drawing inspiration from the methodology of \cite{cacam} and employing Green's functions. The role of Green's functions is particularly significant: identifying regions where they maintain a constant sign enables the application of techniques such as the method of lower and upper solutions or fixed-point theorems. These tools, in turn, allow us to guarantee the existence of constant-sign solutions for a broad class of nonlinear boundary value problems involving involution and piecewise constant dependence in some of their variables.

Our primary objective is to derive the explicit form of the Green's function for a second-order problem with involution and piecewise constant arguments, analyze its main properties and apply comparison principles to approximate numerically the region where this function maintains a constant sign. In some cases, we also provide an analytical characterization of this constant-sign region in terms of eigenvalues of related Dirichlet problems, an approach that is novelty and particularly appealing because it does not require the explicit computation of the Green's function. Armed with this information, we will employ the Krasnosel'skii method to demonstrate the existence of solutions for a nonlinear problem. In particular, we ensure the existence of a positive solution of a perturbed Schrodinger equation.

This article is organized as follows: In Section \ref{motivation} we present the motivation for studying this problem, introducing the Schrodinger equation to be studied at the end of the paper. Section \ref{preli}, provides the preliminary results required for the subsequent analysis. In Section \ref{green}, we obtain the expression of the Green's function for a second-order problem with involution and piecewise constant arguments and examine some of its key properties. Section \ref{positivo} presents a numerical approximation of the region where the Green's function has constant sign, comparing it to analytical expectations. Section \ref{autovalores} offers a characterization of this constant-sign region based on the eigenvalues of related Dirichlet problems, which is particularly valuable as it avoids the need for explicitly computing the Green's function. Finally, in Section \ref{monotono}, we apply the Krasnosel'skii methods to prove the existence of solutions of suitable nonlinear problems, as, among others, the Schrodinger equation.

\section{Motivation and applications}
\label{motivation}

Equations of the form
\begin{equation*}
	v''(t)=f(t,v(t),v(-t),v([t]))
\end{equation*}
arise naturally in situations where a system exhibits continuous dynamics, discrete interventions and temporal symmetry or memory effects. These type of functional differential equations capture phenomena that cannot be modeled accurately by standard ordinary differential equations alone.

Building on this idea, it has been observed since the first experiments with Bose-Einstein condensates (BECs) in optical traps that the dynamics of quantum particles in periodic media cannot be fully described by the classical linear Schrödinger equation. The need to account for inter-particle interactions, periodic structures, and nonlocal effects has motivated the formulation of more complex versions of the Schrödinger equation.

In particular:

\begin{itemize}
	\item Spatial reflection, incorporated via terms like $\psi(-x,t)$, allows modeling symmetries and nonlocal effects in systems with $\mathcal{PT}$-symmetric potentials, which have been shown to play a crucial role in the formation and stability of solitons \cite{WenYan2017}.
	\item The dependence of the potential on the cell-averaged density, represented by $|\psi([x],t)|^2$, naturally arises when studying BECs in optical lattices or superlattices, where the local density of atoms modifies the effective potential in each cell \cite{MalomedDong2013}.
\end{itemize}

Combining these effects, our model is described by the nonlocal nonlinear Schrödinger equation:

\begin{equation}
i \hbar \, \frac{\partial \psi(x,t)}{\partial t} 
= -\frac{\hbar^2}{2m_p} \frac{\partial^2 \psi}{\partial x^2} 
+ \alpha \, |\psi([x],t)|^2 \psi(x,t) 
+ \beta \, \psi(-x,t) \quad x \in I:=[-T,T], \quad t \in [a,b],
\end{equation}
where $\hbar$ is the reduced Planck constant, $m_p$ the particle's mass and $\alpha$ and $\beta$ are real constants.

In the stationary case, assuming solutions of the form $\psi(x,t) = e^{-i \mu t/\hbar} \phi(x)$, the previous equation reduces to:

\begin{equation}
\mu \, \phi(x) = -\frac{\hbar^2}{2m_p} \frac{d^2 \phi(x)}{dx^2} 
+ \alpha |\phi([x])|^2 \phi(x) 
+ \beta \, \phi(-x), \quad x \in I,
\label{prorixi}
\end{equation}
where $\mu$ is the chemical potential or eigenenergy of the stationary state.

This work represents a first step in the study of this type of equations, combining cell-based nonlinear interactions and spatial reflection effects, and opens the door to the analysis of discrete solitons, breathers, and energy bands in periodic media with nonlocal symmetry. Such models may be useful for applications in optical superlattices, BECs in lattices, and simulations of complex quantum systems.

%
%

\section{Preliminaries}
\label{preli}
In this section, we collect notations, definitions, and results, which are needed in the sequel. 

We undertake a review of the concept of involution following references \cite{wiener1993generalized, wiener2002glimpse}.

\begin{definition}
Let $A \subset \mathbb{R}$ be a set containing more than one point, and let $f: A \rightarrow A$ be a function such that $f$ is not the identity $Id$. Then, $f$ is an involution if and only if
\begin{equation*}
f^{2} \equiv f \circ f=Id \textup{ on }A
\end{equation*}
or, equivalently, if
\begin{equation*}
f=f^{-1} \textup{ on }A.
\end{equation*}
If $A= \mathbb{R}$, we refer to $f$ as a strong involution.
\end{definition}

If the function $f$ represents a reflection, meaning $f(x)=-x$, then $f$ is a particular case of an involution at any symmetric interval centered at $0$.

Building on the theoretical framework outlined in \cite[Section 3]{cabada2015differential}, we can derive the Green's functions for differential equations with involution.

In addition, we will need to analyze differential problems that involve piecewise constant arguments, such as those of the form $v([t])$, where the function $[t]$ is defined as follows: 
\begin{equation*}
[t]=
\left\{
\begin{array}{lll}
n, & \mbox{\rm if}  & t \in [n,n+1) \\
-n, & \mbox{\rm if}  & t \in (-n-1,-n].
\end{array}
\right.
\end{equation*}
where $n \in \{0,1,2,\ldots\}$. Observe that $[t]=0$ for every $t \in (-1,1)$.

During this work, we will denote by $\Lambda$ the set of all functions $v:I \rightarrow \mathbb{R}$ that are continuous on $\hat{I}=[-T,[-T]) \cup [[-T],[-T]+1) \cup \ldots \cup [-2,-1 )\cup [-1,1) \cup [1,2) \cup \ldots \cup [[T]-1,[T]) \cup  [[T],T]$, and such that $v(t^{-}) \in \mathbb{R}$ exists for all $t \in \{[-T], \ldots, -1,1, \ldots, [T]\}$. Additionally, if $v \in \Lambda$, we understand that $v(t)=v(t^{+})$ for all $t \in \{[-T], \ldots, -1,1, \ldots, [T]\}$, $t^+ \in I$, assuming that such definition has sense.

\begin{remark}
	If $T \in \mathbb{N}$, then the point $[-T]$ and $[T]$ lie at the boundaries of $I$, and therefore the limits $v([-T]^-)$ and $v([T]^+)$ do not exist. In this case $v \in C(\hat{I} \cup \{T\})$.
\end{remark}

Furthermore, let $\Omega^{2}$ denote the set of all functions $v:I \rightarrow \mathbb{R}$ such that $v$ is $W^{2,1}(I)$ and there exists $v'' \in \Lambda$, where
\begin{equation*}
W^{2,1}(I)=\{v \in C^{1}(I), v'\in \mathcal{AC}(I)\},
\end{equation*}
\begin{equation*}
		AC(I) = \Big\{\, f:I\to\mathbb{R}\ \Big|\ 
		 \exists f' \in \mathcal{L}^1(I), \, f(t)-f(s)=\int_s^t{f'(r) \mathrm{d}r}, \, \forall \, t,s \in I
		\Big\},
\end{equation*}
being 
\begin{equation*}
	\mathcal{L}^{1}(I)=\{f \textup{ is a Lebesgue measurable function on $I$ such that }  \int_{I}{|f|}< \infty\}.
\end{equation*}

By following the procedure outlined in \cite[Section 3]{cacam}, we can derive the Green's function for any differential equation involving both involution and a piecewise constant argument of the form $v([t])$.

\section{Study of the equation $v''(t)+mv(-t)+Mv([t])=\sigma(t)$ with periodic conditions}
\label{green}
\label{subsec:2}
In this section, we focus on analyzing a linear second-order differential equation that includes a reflection term and a piecewise constant function. We will consider periodic boundary conditions. Specifically, we will address the following equation:
\begin{equation}
	\begin{aligned}
\label{tot1}
v''(t) + m v(-t) + M v([t]) &= \sigma(t), \quad  \text{a.e. } t \in I, \\
v(T) &= v(-T), \quad v'(-T) = v'(T),
	\end{aligned}
\end{equation}
where $T>0$, $m$, $M$ are real constants and $\sigma \in \mathcal{L}^{1}(I)$.

We will say that a function $v: I \rightarrow \mathbb{R}$ is a solution of Problem \eqref{tot1} if $v \in \Omega^{2}$ and satisfies equations \eqref{tot1}.


As a first step in the study of Problem \eqref{tot1}, we will obtain the expression of the Green's function for the particular case of $M=0$, following the results presented in \cite[Section 3]{cabada2015differential}. 

\subsection{Study of the equation $v''(t)+mv(-t)=\sigma(t)$ coupled to boundary periodic conditions}

We consider the following second-order problem with reflection, which is a particular case of Problem \eqref{tot1}:
\begin{equation}
\begin{aligned}
v''(t)+m\,v(-t)&=\sigma(t), \, \, \, \, \textup{ a.e. } t \in I,
\label{simp1} \\
v(T)&=v(-T), \quad v'(-T)=v'(T),
\end{aligned}
\end{equation}
where $T>0$, $m \in \mathbb{R}$ and $\sigma \in \mathcal{L}^{1}(I)$.

We denote by $G_{m}$ the Green's function related to Problem \eqref{simp1}. 
 Accordingly, by applying the algorithm described in  \cite{cabada2012computation} together with the formalism of \cite[Section 3.2.1]{cabada2013comparison}, we obtain the explicit expression of $G_{m}$ \cite{greensfunctionsreflection}.
 
Observing the expression, we note that $G_{m}(t,s)=G_m(-t,-s)$ and $G_{m}(t,s)=G_m(s,t)$ for all $s, t \in I$. Therefore, by writing the function only on the triangle $-t \leq s \leq t$, we can recover the full expression by symmetry. For simplicity, we will present it only in this region.

If $m>0$, and $-t \leq s \leq t$, we have that:
\begin{equation}
	G_{m}(t,s) = \frac{\cos (\sqrt{m} s) \csc (\sqrt{m} T) \cos (\sqrt{m} (t-T)) + \sinh (\sqrt{m} s) \text{csch}(\sqrt{m} T) \sinh (\sqrt{m} (t-T))}{2 \sqrt{m}}.
	\label{gexplicita}
\end{equation}

If $m<0$, and $-t \leq s \leq t$, then: 
\begin{equation}
	\label{expgnegativa}
	G_{m}(t,s) =
		\frac{\sin (\sqrt{-m} s) \csc (\sqrt{-m} T) \sin (\sqrt{-m} (t-T))-\cosh (\sqrt{-m} s) \text{csch}(\sqrt{-m} T)
			\cosh (\sqrt{-m} (t-T))}{2 \sqrt{-m}}.
\end{equation}

With this in hand, we now present a series of remarks about the properties of the Green's function $G_{m}$.
\begin{remark}
When $|m| \neq \left(k \pi /T\right)^2$, $k \in \mathbb{N} \cup \{0\}$, the Green's function $G_{m}$ is well-defined and the Problem \eqref{simp1} has a unique solution given by the expression
\begin{equation*}
v(t):=\int_{-T}^{T}{G_{m}(t,s) \sigma(s) \mathrm{d}s}.
\end{equation*}
Note that for $m=0$, the problem does not have a unique solution.
\end{remark}

\begin{remark}
For all $T>0$ and $|m| \neq \left(k \pi/T \right)^2$, $k \in \mathbb{N}\cup \{0\}$, is immediate to verify that $v_{0}(t)=\frac{1}{m}$ is the unique solution of Problem \eqref{simp1} for all $t \in I$ with $\sigma(t)=1$. 
 Moreover, it holds that
\begin{equation}
v_{0}(t)=\frac{1}{m}=\int_{-T}^{T}G_{m}(t,s) \mathrm{d}s, \quad \forall t \in I.
\label{vint}
\end{equation}
\end{remark}

On the other hand, considering the properties of Green's functions and the expression of $G_m$, we can deduce a series of properties of the function $G_{m}$.

\begin{proposition}
$G_{m}$ satisfies the following properties:
\begin{enumerate}
\item $G_{m}$ exists and it is continuous on $I \times I$,
\item $\frac{\partial G_{m}}{\partial{t}}$ and $\frac{\partial^{2}G_{m}}{\partial{t}^{2}}$ exist and are continuous on $\{(t,s) \in I \times I | s \neq t\}$,
\item $\frac{\partial G_{m}}{\partial{t}}(t,t^-)$ and $\frac{\partial G_{m}}{\partial{t}}(t,t^+)$ exist and are finite for all $t \in I$ and they satisfy $\frac{\partial G_{m}}{\partial{t}}(t,t^-)-\frac{\partial G_{m}}{\partial{t}}(t,t^+)=1$, for all $t \in I$,
\item $\frac{\partial^{2}G_{m}}{\partial{t}^{2}}(t,s)+m\,G_{m}(-t,s)=0$ for all $t,s \in I$, $s \neq t$,
\item $G_{m}(T,s)=G_{m}(-T,s)$ for all $s \in (-T,T)$,
\item $\frac{\partial{G_{m}}}{\partial{t}}(T,s)=\frac{\partial{G_{m}}}{\partial{t}}(-T,s)$ for all $s \in (-T,T)$,
\item $G_{m}(t,s)=G_{m}(-t,-s)$ for all $t,s \in I$,
\item $G_{m}(t,s)=G_{m}(s,t)$ for all $t,s \in I$,
\item $G_{m}(t,T)=G_{m}(t,-T)$ for all $t \in (-T,T)$,
\item $\frac{\partial{G_{m}}}{\partial{s}}(t,T)=\frac{\partial{G_{m}}}{\partial{s}}(t,-T)$ for all $t \in (-T,T)$,
\item $\frac{\partial^{2}G_{m}}{\partial{s}^{2}}(t,s)+m\,G_{m}(t,-s)=0$ for all $t,s \in I$, $s \neq t$.
\end{enumerate}
\label{propg}
\end{proposition}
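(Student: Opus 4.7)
The plan is to verify all eleven properties directly from the explicit formulas \eqref{gexplicita} and \eqref{expgnegativa}, exploiting the two symmetries already noted in the text, namely $G_{m}(t,s)=G_{m}(-t,-s)$ and $G_{m}(t,s)=G_{m}(s,t)$. These symmetries allow one to reduce every identity to a computation on the triangle $-t\leq s\leq t$, where the closed-form expression is available, and then transport the conclusion to the full square $I\times I$.

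Properties (1) and (2) are immediate by inspection of the closed-form expression, once one checks that the formulas on the four regions defined by $s=\pm t$ agree on the common boundaries; this matching is automatic from the symmetric extension. Properties (7) and (8) are read off the symmetric structure of the formula and have already been noted. Properties (5) and (6) follow by direct substitution: setting $t=T$ in \eqref{gexplicita} makes the term $\sinh(\sqrt{m}(t-T))$ vanish, so $G_m(T,s)$ reduces to a multiple of $\cos(\sqrt{m}s)$, which is even in $s$; applying (7) yields $G_m(-T,s)=G_m(T,-s)=G_m(T,s)$, which is (5), and an analogous computation after one $t$-differentiation gives (6).

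For the core properties (3) and (4), we fix $s\in(-T,T)$ and set $u(t):=G_m(t,s)$. Property (7) converts the reflected term via $u(-t)=G_m(-t,s)=G_m(t,-s)$, and on the triangle $-t\leq s\leq t$ we also have $-t\leq -s\leq t$, so $u(t)$ and $u(-t)$ are both given by the same closed-form piece. The identity $u''(t)+m\,u(-t)=0$ then reduces to a routine manipulation using $(\cos)''=-\cos$ and $(\sinh)''=+\sinh$ (with the obvious counterparts for the $m<0$ case in \eqref{expgnegativa}). For the jump (3), we compute the two one-sided derivatives $\partial_t G_m(t,t^-)$ and $\partial_t G_m(t,t^+)$: one side comes from \eqref{gexplicita}/\eqref{expgnegativa} on $-t\leq s\leq t$, and the other from its transpose via (8) on $-s\leq t\leq s$; subtracting yields the unit jump.

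Properties (9), (10) and (11) then require no further calculation: they are the transpositions of (5), (6) and (4) under the symmetry (8). Indeed $\partial_s G_m(t,s)=(\partial_1 G_m)(s,t)$ and $\partial_{ss}^2 G_m(t,s)=(\partial_1^2 G_m)(s,t)$, so evaluating at $s=\pm T$ transfers the periodic boundary conditions from the first to the second argument, while (4) applied at the swapped point $(s,t)$ together with (7) produces (11). The main obstacle in this scheme is the algebraic verification of (3) and (4): neither step is conceptually difficult, thanks to the fact that the symmetry (7) keeps $(t,s)$ and $(t,-s)$ inside the same piece of the piecewise formula, but both demand careful bookkeeping of the trigonometric and hyperbolic terms.
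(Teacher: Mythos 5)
Your direct verification from the explicit formulas \eqref{gexplicita}--\eqref{expgnegativa} together with the symmetries $G_m(t,s)=G_m(-t,-s)$ and $G_m(t,s)=G_m(s,t)$ is correct and is essentially the paper's own route: the paper states the proposition without a written proof, asserting it follows from ``the properties of Green's functions and the expression of $G_m$,'' which is precisely the computation you outline. The only point you gloss over slightly is that in item (2) the continuity of $\partial G_m/\partial t$ and $\partial^2 G_m/\partial t^2$ across the anti-diagonal $s=-t$ is not purely ``automatic from the symmetric extension'' but rests on a short cancellation between the trigonometric and hyperbolic contributions (and, for the second derivative, on item (4)); this is routine and does not affect the correctness of your argument.
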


Furthermore, from expressions \eqref{gexplicita}, \eqref{expgnegativa}, and \eqref{vint}, together with the previous properties, the sign of the Green's function $G_{m}$ can also be determined.

We begin by presenting the following remark:
\begin{remark}
	\label{necesario}
	From expression \eqref{vint}, we have that if $G_m$ has a constant sign on $I \times I$, then
	\begin{equation*}
		m \, G_{m}(t,s) \geq 0 \textup{ on }I \times I.
	\end{equation*}
\end{remark}

Taking all of the above into account, we arrive at the following Theorem.

\begin{theorem}
	\label{signog}
\begin{enumerate}
\item If $m \in \left(0, \left( \frac{\pi}{2T}\right)^2 \right)$, then $G_{m}$ is strictly positive on $I \times I$.
\item If $m=\left(\dfrac{\pi}{2T}\right)^2$, $G_{m}$ vanishes at $P:=\{(-T,-T),(0,0),(T,T),(T,-T),(-T,T)\}$ and is strictly positive on $(I \times I) \setminus P$.

\item Let $\overline{c}$ be the smallest positive solution of
\begin{equation}
	\tan({\overline{c}})=\frac{1}{\tanh{\left(\overline{c}\right)}}
\label{solc}
\end{equation}
which is approximately $\overline{c} \approx 0.937552$. Therefore, we have that, if $m \in \left(-\left(\frac{2\overline{c}}{T} \right)^2,0 \right)$, then function $G_m$ is strictly negative on $I \times I$.


\item If $m=-\left(\frac{2 \overline{c}}{T} \right)^2$, $G_m$ vanishes at $P_1:=\{(-T/2,T/2),(T/2,-T/2)\}$.

\item If $m \in \mathbb{R} \setminus \left[-\left( \frac{2 \overline{c}}{T}\right)^2,\left(\frac{\pi}{2T}\right)^2\right]$, then $G_{m}$ changes sign on $I \times I$.
\end{enumerate}
\end{theorem}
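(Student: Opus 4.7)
My strategy is to work directly with the explicit formulas \eqref{gexplicita} and \eqref{expgnegativa}, exploiting the symmetries $G_{m}(t,s)=G_{m}(-t,-s)$ and $G_{m}(t,s)=G_{m}(s,t)$ from Proposition \ref{propg}, which reduce the sign analysis to the fundamental triangle $\{-t\leq s\leq t\}$. The case $m$ outside the closed interval in part 5 will be handled with Remark \ref{necesario}, which supplies the necessary criterion $m\,G_{m}\geq 0$ for constant sign.

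For part 1, I set $a=\sqrt{m}$ with $aT\in(0,\pi/2)$. On the fundamental triangle the factors $\cos(as)$, $\csc(aT)$ and $\cos(a(t-T))$ are all strictly positive, so the first term of \eqref{gexplicita} is positive. The hyperbolic contribution $\sinh(as)\,\mathrm{csch}(aT)\,\sinh(a(t-T))$ may be negative when $s>0$; here I would apply product-to-sum identities to rewrite $\cos(as)\cos(a(t-T))$ and $\sinh(as)\sinh(a(t-T))$, group the results with the factors $\csc(aT)$ and $\mathrm{csch}(aT)$, and exhibit the sum as a manifestly positive combination. A clean boundary check is available at $t=T$, where \eqref{gexplicita} collapses to $\cos(as)/(2a\sin(aT))>0$. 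Part 2 then follows by taking $aT\uparrow\pi/2$: direct substitution into \eqref{gexplicita} shows that the cosines vanish exactly at the points of $P$, while the same sign argument preserves strict positivity on the complement.

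For parts 3 and 4, I set $b=\sqrt{-m}$. In \eqref{expgnegativa} the roles are reversed: the $\cosh$ term now produces the dominant negative contribution, and the critical scaling involves $bT/2$ rather than $bT$. The only candidate vanishing points compatible with the symmetries of Proposition \ref{propg} are $(\pm T/2,\mp T/2)$; substituting into \eqref{expgnegativa} and imposing $G_{m}(T/2,-T/2)=0$ yields the transcendental condition $\tan(bT/2)=1/\tanh(bT/2)$, whose smallest positive root is $\overline{c}$ by \eqref{solc}. For $bT/2<\overline{c}$, I would show that the minimum of $G_{m}$ on the fundamental triangle is attained at $(T/2,-T/2)$ and is strictly negative, giving part 3; at $bT/2=\overline{c}$ this minimum equals $0$, giving part 4. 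For part 5, Remark \ref{necesario} forces $m\,G_{m}\geq 0$ whenever $G_{m}$ has constant sign, so a continuity-in-$m$ argument across the thresholds $(\pi/(2T))^{2}$ and $-(2\overline{c}/T)^{2}$ promotes the zeros on $P$ and $P_{1}$ into genuine sign changes.

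The hard part will be the verification in part 3 that the minimum of $G_{m}$ on the triangle is attained precisely at $(T/2,-T/2)$: I must locate the interior critical points of the two-variable function given by \eqref{expgnegativa}, isolate those compatible with $G_{m}(t,s)=G_{m}(-t,-s)=G_{m}(s,t)$, and argue via monotonicity of $x\mapsto \tan(x)-\coth(x)$ on $(0,\pi/2)$ that the first value of $b$ for which this minimum reaches $0$ is exactly the root of \eqref{solc}. This also secures the sharpness of the interval in part 3 and the location of the zero set $P_{1}$ in part 4.
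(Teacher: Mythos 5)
Your overall outline (reduce to a triangle by symmetry, treat $m>0$ and $m<0$ separately via \eqref{gexplicita} and \eqref{expgnegativa}, use Remark \ref{necesario} for part 5) matches the paper's, but at each of the three key steps you assert a conclusion without the argument that actually carries the load. In part 1, the claim that product-to-sum identities exhibit \eqref{gexplicita} as a ``manifestly positive combination'' is not established and is not manifest: writing $u=s+t-T$, $v=s-t+T$, you must show
$\bigl(\cos(au)+\cos(av)\bigr)/\sin(aT)>\bigl(\cosh(av)-\cosh(au)\bigr)/\sinh(aT)$ on the relevant range, which is a genuine two-variable inequality. The paper avoids this entirely: properties 4 and 11 of Proposition \ref{propg} give concavity of $G_m(\cdot,s)$ and $G_m(t,\cdot)$ wherever $G_m$ is positive and $m>0$, so the minimum can only sit on the diagonals $t=\pm s$, and the problem collapses to one-variable functions of $t$. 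Without either the concavity reduction or the completed inequality, parts 1 and 2 are not proved.

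In parts 3--4, the assertion that ``the only candidate vanishing points compatible with the symmetries are $(\pm T/2,\mp T/2)$'' does not follow: the symmetries $G_m(t,s)=G_m(-t,-s)=G_m(s,t)$ only pair up critical points; they do not place them on the anti-diagonal. The paper locates the maximizer by actually solving $\partial_t G_m=0$ and $\partial_s G_m=0$, substituting back to get $\tan(\alpha(\hat t-T))=\coth(\alpha(\hat t-T))$ and $\tan(\alpha\hat s)=\coth(\alpha\hat s)$, and then running a two-case analysis ($\hat s>0$ forces $\overline{\alpha}T>\pi$ and is discarded; $\hat s<0$ gives $\overline{\alpha}T=2\overline{c}$ and hence $(\hat t,\hat s)=(T/2,-T/2)$) --- precisely the work you defer as ``the hard part.'' (Also, for negativity you must control the \emph{maximum} of $G_m$, not its minimum as written.) Finally, for part 5 a ``continuity-in-$m$'' argument only perturbs the thresholds locally and says nothing about, e.g., large positive $m$; the paper instead uses the identity $G_{m_0}=G_{m_1}+(m_1-m_0)\int_{-T}^{T}G_{m_1}(t,r)G_{m_0}(-r,s)\,\mathrm{d}r$ to show $G_m$ is decreasing in $m$ while of constant sign, which combined with Remark \ref{necesario} rules out constant sign for every $m$ outside the closed interval. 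You should either adopt that monotonicity identity or supply an argument valid for all such $m$.
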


\begin{proof}
First, taking into account the symmetry of the function $G_{m}$ derived from properties $7$ and $8$ of Proposition \ref{propg}, to study the sign of the function $G_{m}$, we can restrict ourselves to analyzing the behavior of the function in the triangle $-s \leq t \leq s$, with $(t,s) \in I \times I$. 

On the one hand, if $G_{m}$ is strictly positive, then, from the previous remark, it is necessary that $m>0$. Moreover, from  properties $4$ and $11$ of Proposition \ref{propg}, we can guarantee the concavity of the functions $G_{m}(\cdot, s)$ and $G_{m}(t, \cdot)$. From this, we can deduce that the minimum of $G_{m}$, when it is positive, can occur only either at $t=s$ or $t=-s$.

Therefore, to analyze the region where the function is strictly positive, it is enough to study the cases where $t=s$ or $t=-s$ with  $-s \leq t \leq s$ and $m>0$. By analyzing these expressions and examining their minima, it can be easily verified that, while $0 \leq m\,T<\frac{\pi}{2}$, the function $G_{m}$ is positive, and that at $m\,T=\frac{\pi}{2}$, it vanishes at $(0,0)$, $(T,T)$, and $(-T,T)$. Taking into account the symmetry of $G_{m}$, we deduce the assertions $1$ and $2$.

On the other hand, to study the region where the Green's function is negative, it suffices, from previous remark, to consider the triangular domain defined by $-t \leq s \leq t$, with $(t,s) \in I \times I$ and $m<0$.

 For simplicity, we will use the notation $\alpha=\sqrt{-m}>0$. In this case:
\begin{equation}
	G_m(t,s)=	\frac{\sin (\alpha s) \csc (\alpha T) \sin (\alpha (t-T))-\cosh (\alpha s) \text{csch}(\alpha T)
		\cosh (\alpha (t-T))}{2 \alpha}.
		\label{expneg}
\end{equation}

Differentiating with respect to $t$, we obtain
\begin{equation}
	\frac{\partial}{\partial{t}}G_m(t,s)=\frac{1}{2}\left(\sin{(\alpha s)} \csc{(\alpha T)} \cos{(\alpha(t-T))}-\cosh{(\alpha s)}\textup{csch}(\alpha T) \sinh{(\alpha (t-T))} \right).
	\label{eqgt}
\end{equation}

As we are interested in finding a point $(\hat{t}, \hat{s}) \in I \times I$, where the maximum of $G_m$ is attained, we set the above expression equal to zero, thus obtaining

\begin{equation}
	\cosh{(\alpha \hat{s})}\textup{csch}(\alpha T)=\dfrac{\sin{(\alpha \hat{s})}\csc{(\alpha T)}\cos{(\alpha (\hat{t}-T))}}{\sinh{(\alpha (\hat{t}-T))}}.
	\label{igualadacero}
\end{equation}

Next, we substitute the above expression into \eqref{expneg}, obtaining
\begin{equation}
	G_m(\hat{t},\hat{s})=\frac{\sin{(\alpha \hat{s})}\csc{(\alpha T)}\sin{(\alpha (\hat{t}-T))}-\sin{(\alpha \hat{s})} \csc{(\alpha T)} \cos{(\alpha (\hat{t}-T))} \coth{(\alpha (\hat{t}-T))}}{2 \alpha}.
	\label{goxala}
\end{equation}

Since we are looking for the point where the maximum is zero, by setting the previous expression equal to zero we obtain
\begin{equation}
	\tan{(\alpha (\hat{t}-T))}=\dfrac{1}{\tanh{(\alpha (\hat{t}-T))}}.
	\label{igualadacero2}
\end{equation}
It is important to note that if we consider $\hat{s}=0$, the expression \eqref{goxala} also equals zero. However, it is obvious that, in such a case, equation \eqref{igualadacero} would not be satisfied.

Next, the same procedure is repeated with respect to $s$: we differentiate with respect to $s$, set the derivative to zero, substitute it into expression \eqref{expneg}, and then set it equal to zero again. In this case, we would obtain that
\begin{equation}
	\tan{(\alpha \hat{s})}=\dfrac{1}{\tanh{(\alpha \hat{s})}}.
	\label{exparas}
\end{equation}

As a consequence, from expressions \eqref{igualadacero2} and \eqref{exparas}, we must obtain the smallest positive value $\overline{c}$ that satisfies equations \eqref{solc}.
Moreover, we conclude that
\begin{equation}
	\overline{c}=|\overline{\alpha} \hat{s}|=-\overline{\alpha} (\hat{t}-T),
	\label{relcs}
\end{equation}
where $\overline{\alpha}>0$. 

Due to the old character of both functions: $\tan{(x)}$ and $\tanh{(x)}$, it is clear that $\overline{c}$ is a solution of \eqref{solc} if and only if $-\overline{c}$ is also a solution.

Moreover, from equation \eqref{relcs}, there are two possible cases: 
\begin{equation*}
\overline{c}= \overline{\alpha} \hat{s}>0 \textup{ with } \hat{s}>0 \textup{ or } \overline{c}=-\overline{\alpha} \hat{s}>0 \textup{ with } \hat{s}<0.
\end{equation*}

\begin{enumerate}
	\item Case 1: $\hat{s}>0$
	
Substituting $\overline{c}= \overline{\alpha} \hat{s}$ into equation \eqref{igualadacero}, we obtain
\begin{equation*}
	\frac{\sin{(\overline{\alpha} T)}}{\sinh{(\overline{\alpha} T)}}=\frac{\sin{(\overline{c})} \cos{(\overline{c})}}{-\cosh{(\overline{c})} \sinh{(\overline{c})}}= -\frac{\sin{(2 \overline{c})}}{\sinh{(2 \overline{c})}} <0.
\end{equation*}
Indeed, by equation \eqref{solc} we know that $0< 2 \overline{c}< \pi$, and, therefore, $\sin{2 \overline{c}}>0$. Hence, if a solution exists, it necessarily satisfies
 $$\overline{\alpha}T> \pi.$$
\item Case 2: $\hat{s}<0$

If $\hat{s}<0$, a similar computation using equation \eqref{igualadacero} yields
\begin{equation*}
	\frac{\sin{(\overline{\alpha} T)}}{\sinh{(\overline{\alpha} T)}}=\frac{\sin{(-\overline{c})} \cos{(-\overline{c})}}{\cosh{(-\overline{c})} \sinh{(-\overline{c})}}= \frac{\sin{(2 \overline{c})}}{\sinh{(2 \overline{c})}} >0.
\end{equation*}
In this case, since $2 \overline{c} \in (0, \pi)$, the function
$$f(x)= \frac{\sin{(2x)}}{\sinh{(2x)}}
$$
is injective on this interval. Consequently, there exists a unique $\overline{\alpha}>0$ such that
\begin{equation}
	\overline{\alpha} T \in (0, \pi) \textup{ and }\overline{\alpha} T=2 \overline{c}.
	\label{reltc}
\end{equation}
\end{enumerate}

Since we are interested in the smallest positive value of $\overline{\alpha}>0$, we must necessarily select this second case.

Furthermore, taking into account expressions \eqref{relcs} and \eqref{reltc}, we can calculate the values of $\hat{t}$ and $\hat{s}$, obtaining  $\hat{t}=T/2$ and $\hat{s}=-T/2$. 

Based on the preceding analysis, we can now assert that the maximum of the function, when it is negative, is attained at the point $(T/2,-T/2)$ and at its symmetric counterpart $(-T/2,T/2)$ (property $7$ of \ref{propg}). Thus, assertions $3$ and $4$ hold.

It is important to note that, although by the properties of the function $G_m$ given in Proposition \ref{propg} it is not differentiable at $t=s$, this does not affect our previous reasoning, since the jump of the first derivative of the Green's function, as stated in property $3$, already guarantees that the maximum cannot occur at $t=s$.

To prove assertion $5$ we will use expression $(4.4)$ and Proposition $4.1$ of \cite{cacam}. In this specific case, we have
\begin{equation*}
	G_{m_0}(t,s)=G_{m_1}(t,s)+(m_1-m_0)\int_{-T}^{T}{G_{m_1}(t,r)G_{m_0}(-r,s) \mathrm{d}r}
\end{equation*}
where $G_{m_0}$ and $G_{m_1}$ are the Green's functions related to Problem \eqref{simp1} corresponding to $m_0$ and $m_1 \in \mathbb{R} \setminus \{0\}$, respectively. 

From this, we deduce that, when $G_m$ has constant sign, it decreases as $m$ increases. Taking this into account, together with Remark \ref{necesario}, we conclude the proof of the Theorem.

\end{proof}

This is the first time, to the best of our knowledge, that the sign of the Green's function $G_m$ of Problem \eqref{simp1} is characterized.

With all this at hand, we  can now derive and characterize the explicit expression of the Green's function of Problem \eqref{tot1}.

\subsection{Green's Function for Problem \eqref{tot1}}

Once the explicit expression for the Green's function of the problem without a piecewise constant argument has been calculated and analyzed, we proceed to obtain and study the explicit expression of the Green's function for Problem \eqref{tot1}, which we denote by $H_{m,M}$. To do this, we follow the steps outlined in \cite[Section 3]{cacam}.

Let us assume that Problem \eqref{tot1} has a unique solution $v(t)$. Then, we can write it as
\begin{equation*}
v(t)=\int_{-T}^{T}{H_{m,M}(t,s)\sigma(s) \mathrm{d}s}.
\end{equation*}

Following equation $(3.5)$ in \cite{cacam}, we can express the Green's function $H_{m,M}$ in terms of $G_{m}$ arriving at the following expression: 
\begin{equation}
H_{m,M}(t,s)=G_{m}(t,s)-M\left[\sum_{j=[-T]}^{[T]}\sum_{i=[-T]}^{[T]}G_{m}(j,s)\int_{-T}^{T}G_{m}(t,r)\alpha_{ij}(r) \mathrm{d}r\right],
\end{equation}
where $G_{m}$ is the Green's function for the Problem \eqref{simp1} and $\alpha_{ij}: I \rightarrow \mathbb{R}$ are defined by
\begin{eqnarray*}
\alpha_{[-T]j}(r)&:=&\tilde{a}_{[-T]j}\chi_{(-T,-[T])}(r), \\ 
\alpha_{[-T+1]j}(r)& := &\tilde{a}_{[-T+1]j}\chi_{(-[T],-[T+1])}(r), \\
&\vdots& \\ 
\alpha_{0j}(r) &:=& \tilde{a}_{0j}\chi_{(\max\{-1,T\},\min\{1,T\}\})}(r), \\
&\vdots& \\
\alpha_{[T]j}(r) &:=& \tilde{a}_{[T]j}\chi_{([T],T)}(r),
\end{eqnarray*}
where $j \in \{[-T],[-T+1], \ldots, 0, \ldots, [T]\}$ and $\tilde{a}_{ij}$ are the elements of the inverse of the matrix $A$ given by
\begin{equation*}
A \equiv
\begin{pmatrix}
    Ma_{[-T][-T]}+1 & Ma_{[-T][-T+1]} & \cdots & Ma_{[-T]0} & \cdots & Ma_{[-T][T]} \\
    Ma_{[-T+1][-T]} & Ma_{[-T+1][-T+1]}+1 & \cdots & Ma_{[-T+1]0} &  \cdots & Ma_{[-T+1][T]} \\
    \vdots  & \vdots & \cdots & \vdots & \ldots & \vdots \\
    \vdots  & \vdots & \cdots & \vdots & \ldots & \vdots \\
    Ma_{[T][-T]} & Ma_{[T][-T+1]} & \cdots & Ma_{[T]0} & \cdots & Ma_{[T][T]}+1
\end{pmatrix},
\label{matriz}
\end{equation*}
where
\begin{align*}
 a_{l,[-T]} &:= \int_{-T}^{[-T]} G_{m}(l,s) \, \mathrm{d}s,\\ 
 a_{l,[T]} &:= \int_{[T]}^{T} G_{m}(l,s) \, \mathrm{d}s, \\
 a_{l,0} &:= \int_{\max\{-T,-1\}\}}^{\min\{1,T\}\}} G_{m}(l,s) \, \mathrm{d}s \\
 \intertext{ and }
a_{l,k} &:= \int_{k-1}^{k} G_{m}(l,s) \mathrm{d}s \textup{ for }k \in \{[-T]+1, \ldots , [T]-1\} \setminus \{0\}.
 \end{align*}

In the particular case where $T \in (0,1]$ we arrive at the following expression
\begin{equation}
H_{m,M}(t,s)=G_{m}(t,s)-M\frac{\int_{-T}^{T}G_{m}(t,r) \mathrm{d}r}{1+M \int_{-T}^{T}G_{m}(0,r) \mathrm{d}r}G_{m}(0,s).
\label{htpequeño}
\end{equation}
If we take into account expression \eqref{vint}, we finally obtain the expression
\begin{equation}
H_{m,M}(t,s)=G_{m}(t,s)-\frac{M}{m+M}G_{m}(0,s) \quad \forall (t,s) \in I \times I, \, \forall \, T \in (0,1].
\label{casosencillo}
\end{equation}

From now on, as in \cite[Section 3]{cacam} we will introduce the following sets:
\begin{equation*}
D \equiv \{-[T], \ldots, 0, \ldots, [T]\}
\end{equation*}
and
\begin{equation*}
D_{t} \equiv D \cup \{t,-t\}, \textup{ for }t \in I \textup{ given}.
\end{equation*}
Using the definition of the Green's function and the properties of $G_{m}$ shown in Proposition \ref{propg}, it is not difficult to verify that:

\begin{proposition}
\label{carah}
The Green's function $H_{m,M}$, related to Problem \eqref{tot1}, satisfies the following conditions:
\begin{enumerate}
\item $H_{m,M}(\cdot,s)$ is well-defined and continuous for all $t \in I$, of class $C^{1}$ for all $t \in I$, $t \neq s$, and of class $C^{2}$ for all $t \in I$, $t \neq s$ and $t \notin D \setminus \{0\}$.
\item $H_{m,M}(t, \cdot)$ is well-defined and continuous for all $s \in I$. Moreover, $H_{m,M}(t,\cdot)$ is of class $C^{2}$ for all $s \in I$, $s \notin D_{t}$.
\item For each $s \in (-T,T)$, the function $t \rightarrow H_{m,M}(t,s)$ is the solution of the following differential equation:
\begin{equation}
\frac{\partial^{2}}{\partial{t}^{2}}H_{m,M}(t,s)+m\,H_{m,M}(-t,s)+M\,H_{m,M}([t],s)=0,
\end{equation}
for all $t \in I$, $t \neq s$ and $t \notin D \setminus \{0\}$.
\item For each $t \in (-T,T)$, $t \notin D$, the lateral limits:
\begin{equation*}
\frac{\partial}{\partial{t}}H_{m,M}(t^-,t)=\frac{\partial}{\partial{t}}H_{m,M}(t,t^+) \quad \textup{and} \quad \frac{\partial}{\partial{t}}H_{m,M}(t,t^-)=\frac{\partial}{\partial{t}}H_{m,M}(t^+,t),
\end{equation*}
exist and are real. Furthermore,
\begin{equation*}
\frac{\partial}{\partial{t}}H_{m,M}(t^+,t)-\frac{\partial}{\partial{t}}H_{m,M}(t^-,t)=\frac{\partial}{\partial{t}}H_{m,M}(t,t^-)-\frac{\partial}{\partial{t}}H_{m,M}(t,t^+)=1.
\end{equation*}
\item For each $s \in (-T,T)$, the function $t \rightarrow H_{m,M}(t,s)$ satisfies the boundary conditions:
\begin{equation*}
H_{m,M}(T,s)=H_{m,M}(-T,s) \quad \textup{and} \quad \frac{\partial}{\partial{t}}H_{m,M}(T,s)=\frac{\partial}{\partial{t}}H_{m,M}(-T,s).
\end{equation*}
\item For each $t \in (-T,T)$, the function $s \rightarrow H_{m,M}(t,s)$ satisfies the boundary conditions:
\begin{equation*}
	H_{m,M}(t,T)=H_{m,M}(t,-T) \quad \textup{and} \quad \frac{\partial}{\partial{s}}H_{m,M}(t,T)=\frac{\partial}{\partial{s}}H_{m,M}(t,-T).
\end{equation*}
\item $H_{m,M}(t,s)=H_{m,M}(-t,-s)$ for all $t,s \in I$.
\item $\frac{\partial^{2}}{\partial{s}^{2}}H_{m,M}(t,s)+m\,H_{m,M}(t,-s)=0$ for all $t, s \in I$, $s \notin D_{t}$.
\end{enumerate}
\end{proposition}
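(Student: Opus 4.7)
The plan is to derive each of the eight assertions from the explicit formula
\begin{equation*}
H_{m,M}(t,s)=G_m(t,s)-M\sum_{i,j\in D}\phi_{ij}(t)\,G_m(j,s),
\end{equation*}
where $\phi_{ij}(t):=\tilde a_{ij}\int_{I_i}G_m(t,r)\,dr$ and $I_i$ is the cell underlying the characteristic function $\alpha_{ij}$, by transferring the eleven properties of $G_m$ from Proposition \ref{propg}. The factorisation of the correction term into $t$-only factors $\phi_{ij}(t)$ and $s$-only factors $G_m(j,s)$ allows regularity, boundary behaviour, symmetry and PDE-type identities to be checked factor by factor.

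Assertions 1 and 2 are immediate: $G_m(t,s)$ already enjoys the claimed regularity by items 1--3 of Proposition \ref{propg}; each $\phi_{ij}$ is smooth in $t$ away from the endpoints of $I_i$, producing the exclusion set $D\setminus\{0\}$ (the point $0$ being interior to the central cell); and each factor $G_m(j,\cdot)$ is $C^2$ on $I\setminus\{j\}$ by item 2, contributing the set $D$ to $D_t$ in assertion 2. Assertions 5 and 6 follow factor by factor from items 5, 6, 9 and 10 of Proposition \ref{propg}, since $\phi_{ij}(t)$ inherits the periodic boundary conditions in $t$ through the integral and each $G_m(j,\cdot)$ inherits them in $s$. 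Assertion 8 is obtained by applying item 11 separately to $G_m(t,s)$ and to each $G_m(j,s)$, with exceptional set exactly $D_t$. For assertion 7 the change of variable $r\mapsto -r$ maps $I_i$ to $I_{-i}$; combined with the inherent $(i,j)\mapsto(-i,-j)$ symmetry of the matrix $A$ derived from items 7 and 8 of Proposition \ref{propg}, this leaves the double sum invariant under $(t,s)\mapsto(-t,-s)$.

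The crux is assertion 3, from which assertion 4 follows. Writing $L_t:=\partial_{tt}+m\,\tau+M[\,\cdot\,]$ with $\tau f(t)=f(-t)$, item 4 of Proposition \ref{propg} gives $L_t G_m(t,s)=M G_m([t],s)$. Differentiating twice under the integral and accounting for the unit jump of $\partial_t G_m$ across $r=t$ (item 3), one obtains $\phi_{ij}''(t)+m\phi_{ij}(-t)=\tilde a_{ij}\chi_{I_i}(t)$. Hence, for $t$ in the interior of $I_{i_0}$ (so $[t]=i_0$),
\begin{equation*}
\sum_i\bigl[\phi_{ij}''(t)+m\phi_{ij}(-t)+M\phi_{ij}([t])\bigr]=\tilde a_{i_0 j}+M\sum_i\tilde a_{ij}\,a_{[t]\,i}=\delta_{[t]\,j},
\end{equation*}
by the defining relation $AA^{-1}=I$ with $A_{lk}=Ma_{lk}+\delta_{lk}$. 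Summing against $G_m(j,s)$, the correction cancels $L_t G_m(t,s)$ and yields $L_t H_{m,M}=0$. Assertion 4 then falls out because each $\phi_{ij}$ is $C^1$ in $t$ across $t=s$, so the jump of $\partial_t H_{m,M}$ on the diagonal is inherited verbatim from item 3 of Proposition \ref{propg}. The main obstacle is this bookkeeping: producing the $\chi_{I_i}$ term from the interior jump and recognising the defining linear system of $A^{-1}$ in the resulting double sum; everything else is a routine structural transfer from $G_m$ to $H_{m,M}$.
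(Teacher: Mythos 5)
Your verification is correct and follows exactly the route the paper intends but leaves implicit (the paper only states that the properties follow ``from the definition of the Green's function and the properties of $G_m$ shown in Proposition \ref{propg}''): a factor-by-factor transfer through the explicit formula $H_{m,M}=G_m-M\sum_{i,j}\phi_{ij}(t)G_m(j,s)$, with the only nontrivial step being the identity $\tilde a_{i_0j}+M\sum_i a_{i_0 i}\tilde a_{ij}=\delta_{i_0j}$ coming from $AA^{-1}=I$, which you handle correctly. No gaps worth flagging.
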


\section{Region of constant sign for the Green's function $H_{m,M}$}
\label{positivo}
The aim of this section is to determine, as a function of the parameters $m$ and $M$, the region where the Green's function $H_{m,M}$ maintains a constant sign on $I \times I$.

We begin by deriving a necessary condition for $H_{m,M}$ to be either positive or negative. To this end, we present the following result:

\begin{proposition}
A necessary condition for the Green's function $H_{m,M}$, related to Problem \eqref{tot1}, to be positive on $I \times I$ is that $m+M>0$. Similarly, a necessary condition for $H_{m,M}$ to be negative on $I \times I$ is that $m+M<0$.
\end{proposition}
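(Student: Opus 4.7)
The plan is to mimic the trick used in the analogous Remark for $G_{m}$ (equation \eqref{vint}): identify the unique solution of Problem \eqref{tot1} when $\sigma \equiv 1$ and then compare its sign with the sign of $H_{m,M}$ via the integral representation.

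First I would try the constant ansatz $v_{0}(t) \equiv c$. A constant function trivially belongs to $\Omega^{2}$, satisfies the periodic boundary conditions $v_{0}(T)=v_{0}(-T)$ and $v_{0}'(-T)=v_{0}'(T)$, and reduces the left-hand side of the equation to $m\,c + M\,c = (m+M)\,c$. Note that for the Green's function $H_{m,M}$ to be well-defined one needs the associated homogeneous problem to admit only the trivial solution; taking $\sigma\equiv 0$ in the ansatz shows that a necessary condition for this is $m+M\neq 0$, so we may divide and obtain the unique constant solution $c=1/(m+M)$. By uniqueness of the solution,
\begin{equation*}
\frac{1}{m+M}=v_{0}(t)=\int_{-T}^{T}H_{m,M}(t,s)\,\mathrm{d}s,\qquad \forall\,t\in I.
\end{equation*}

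From here the conclusion is immediate. If $H_{m,M}(t,s)>0$ on $I\times I$, then the right-hand side is strictly positive for every $t\in I$, forcing $1/(m+M)>0$ and hence $m+M>0$. Analogously, if $H_{m,M}(t,s)<0$ on $I\times I$, then the integral is strictly negative and $m+M<0$ follows.

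There is no serious obstacle here; the only point that requires a moment of care is justifying that $m+M\neq 0$ (so that the formula $c=1/(m+M)$ makes sense) whenever $H_{m,M}$ exists, which I would address by the kernel argument above. Everything else reduces to integrating the sign inequality, much as in Remark \ref{necesario} for the function $G_{m}$.
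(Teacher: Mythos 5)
Your argument is exactly the paper's: assume uniqueness, observe that $v_{0}\equiv 1/(m+M)$ is the unique solution for $\sigma\equiv 1$, and read off the sign of $m+M$ from the integral representation $\int_{-T}^{T}H_{m,M}(t,s)\,\mathrm{d}s=1/(m+M)$. Your extra remark that $m+M\neq 0$ is needed for well-posedness matches the paper's observation that $M=-m$ is a curve of eigenvalues, so the proposal is correct and essentially identical to the published proof.
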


\begin{proof}
We assume that Problem \eqref{tot1} has a unique solution for any $\sigma \in \mathcal{L}(I)$. The proof is a direct consequence of the fact that if $\sigma(t)=1$ for all $t \in I$ then $v(t)=\frac{1}{m+M}$ is its unique solution.
\end{proof}

\begin{remark}
Notice that the curve $M=-m$ is a curve of eigenvalues of problem \eqref{tot1}.
\end{remark}

Secondly, we will attempt to constrain the points $(t,s) \in I \times I$ where the function $H_{m,M}$ may attain its minimum when it is positive, or its maximum when it is negative, and approximate the region.

In this context, we present the following result:

\begin{proposition}
	\label{posicion}
	Let $H_{m,M}$ be the Green's function of Problem \eqref{tot1}, and suppose that it is of constant sign. Then:
	\begin{enumerate}
		\item If $m \geq 0$ and $M \geq 0$, the minimum, when $H_{m,M}$ is positive, must be attained at the diagonal $t=s$ for $s \in I$.
		\item If $m \geq 0$, the minimum, when $H_{m,M}$ is positive, or the maximum, when it is negative, must be attained either at integer values of $s$ with $s \in I \cap \mathbb{Z}$ or at the diagonal $s=t$ for $t \in I$.
	\end{enumerate}
\end{proposition}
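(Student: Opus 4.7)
The plan is to convert the two differential identities of Proposition \ref{carah} into piecewise concavity/convexity statements for $H_{m,M}$ in each variable, and then localize the extrema to the genuine breakpoints.

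For part $2$, fix a global extremizer $(t^*,s^*)\in I\times I$ and study $s\mapsto H_{m,M}(t^*,s)$. Property $8$ of Proposition \ref{carah} reads
\begin{equation*}
\frac{\partial^2 H_{m,M}}{\partial s^2}(t^*,s) = -m\,H_{m,M}(t^*,-s), \qquad s\in I\setminus D_{t^*},
\end{equation*}
with $D_{t^*}=D\cup\{t^*,-t^*\}$. Since $m\geq 0$ and $H_{m,M}$ has constant sign, this second derivative has a fixed sign on every smooth piece: $H_{m,M}(t^*,\cdot)$ is piecewise concave when $H_{m,M}>0$ and piecewise convex when $H_{m,M}<0$. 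The minimum of such a positive function (or the maximum of such a negative one) over $I$ is therefore attained at some breakpoint $s^*\in D_{t^*}$, once $s=\pm T$ are identified via properties $5$--$6$.

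Next, I would rule out $s^*=-t^*$ whenever $-t^*\notin D\cup\{t^*\}$. The construction of $H_{m,M}$ presents $s\mapsto H_{m,M}(t^*,s)$ as a linear combination (with coefficients depending on $t^*$) of $s\mapsto G_m(t^*,s)$ and of $s\mapsto G_m(j,s)$ for $j\in D$; since the partial derivatives of the Green's function $G_m$ only jump across the diagonal (combine properties $3$ and $8$ of Proposition \ref{propg}), the jumps of $\partial_s H_{m,M}(t^*,\cdot)$ occur only at $s=t^*$ and at $s\in D$. In particular $\partial_s H_{m,M}(t^*,\cdot)$ is continuous at $s=-t^*$, and combined with the sign of the second derivative on both adjacent pieces, this implies that $H_{m,M}(t^*,\cdot)$ is $C^1$ and concave (respectively convex) throughout a whole neighbourhood of $s=-t^*$. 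A concave function admits no interior local minimum and a convex one no interior local maximum, so $s^*=-t^*$ is excluded, yielding $s^*\in D\cup\{t^*\}$ and hence part $2$.

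For part $1$, the additional hypothesis $M\geq 0$, together with the fact that $H_{m,M}>0$ is forced under $m,M\geq 0$, permits the analogous argument in the $t$ variable. Property $3$ of Proposition \ref{carah} gives
\begin{equation*}
\frac{\partial^2 H_{m,M}}{\partial t^2}(t,s^*) = -m\,H_{m,M}(-t,s^*)-M\,H_{m,M}([t],s^*)\leq 0
\end{equation*}
off $t=s^*$ and $t\in D\setminus\{0\}$. Property $1$ of Proposition \ref{carah} guarantees that $H_{m,M}(\cdot,s^*)$ remains $C^1$ at each $t=k\in D\setminus\{0\}$, so no jump of $\partial_t H_{m,M}$ occurs there; the only genuine jump is the positive unit jump at $t=s^*$ recorded in property $4$. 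After identifying $t=\pm T$ via properties $5$--$6$, $t\mapsto H_{m,M}(t,s^*)$ is therefore concave on the ``circle minus the point $t=s^*$'', and a concave function on such a domain attains its minimum exactly at the removed point, whence $t^*=s^*$. The delicate technical point throughout is confirming the absence of spurious derivative jumps at $s=-t^*$ and at $t\in D\setminus\{0\}$ --- Proposition \ref{carah} only records failure of $C^2$ regularity at these points --- and this is precisely what the explicit $G_m$-decomposition and property $1$ of Proposition \ref{carah} supply.
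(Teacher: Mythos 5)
Your proof is correct and follows essentially the same route as the paper's: both deduce piecewise concavity/convexity from items (3) and (8) of Proposition \ref{carah}, use the diagonal jump condition (4) and the periodic identifications (5)--(6) to glue $I$ into a circle, and localize the extremum at the breakpoints. The one place you go beyond the paper's (very terse) proof is in explicitly ruling out $s=-t^*$ in part 2 via the $G_m$-decomposition of $H_{m,M}(t^*,\cdot)$, which shows $\partial_s H_{m,M}(t^*,\cdot)$ jumps only at $s=t^*$ and at integer $s$; this step is genuinely needed, since item (2) of Proposition \ref{carah} alone would only place the extremizer in $D_{t^*}=D\cup\{t^*,-t^*\}$, and your treatment of it is correct.
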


\begin{proof}
	The first part follows directly from items (1), (3), (4) and (5) of Proposition \ref{carah}, which respectively address the regularity of $H_{m,M}(\cdot,s)$, its concavity under the assumptions of the hypotheses, the jump condition for the derivative at the diagonal, and the periodic boundary conditions.
	
	The second statement follows from items (2), (4), (6), and (8), which respectively describe the regularity of $H_{m,M}(t,\cdot)$, the jump condition at the diagonal, the periodic boundary conditions, and the fact that $H_{m,M}$ is concave when it is positive and convex when it is negative.
\end{proof}

On the other hand, due to the symmetry of the function $H_{m,M}$ shown in part $7$ of Proposition \ref{carah}, to search for the minimum or the maximum and delimit the region of constant sign, it will only be necessary to work with values of $t \in I$ and $s \in [0,T]$.

Following the steps outlined in \cite[Section 4.1]{cacam}, we can derive a relation between the Green's functions $H_{m,M_{0}}$ and $H_{m,M_{1}}$ related  to problems \eqref{tot1}, with parameters $(m,M_{0})$ and $(m,M_{1})$, respectively. The formula is the next one:
\begin{equation}
H_{m,M_{0}}(t,s)=H_{m,M_{1}}(t,s)+(M_{1}-M_{0})\int_{-T}^{T}{H_{m,M_{1}}(t,r)H_{m,M_{0}}([r],s) \mathrm{d}r}.
\label{relacion}
\end{equation}

From equation \eqref{relacion} and following steps in \cite{cacam}, it can be deduced that, since we are interested in finding $M_{0}$ such that $\min_{(t,s) \in I \times I}{H_{m,M_{0}}}(t,s)=H_{m,M_{0}}(\hat{t},\hat{s})=0$ for some $(\hat{t},\hat{s}) \in I \times I$, our objective is to find, for each $m$, the smallest $M_{0}$ satisfying
\begin{equation*}
M_{0}= \overline{T}_{m}(M_{0},\hat{t},\hat{s}) \textup{ for some } (\hat{t},\hat{s}) \in I \times I,
\end{equation*}
where the operator $\overline{T}_{m}: \mathbb{R} \times I \times I \rightarrow \mathbb{R}$ is defined as:
\begin{equation*}
\overline{T}_{m}(M_{0},t,s)=\frac{G_{m}(t,s)}{\int_{-T}^{T}{G_{m}(t,r)H_{m,M_{0}}([r],s)\mathrm{d}r}}.
\end{equation*}

Finding the minimum or maximum of $T_m(M_0,t,s)$ for $(t,s) \in I \times I$ can be significantly more efficient computationally than minimizing $H_{m,M_0}$. This is because the integral can be split into separate intervals, allowing $H_{m,M_0}$ to be evaluated only at integer values of $t \in I$ rather than for every value of $t$ within $I$. Furthermore, evaluating $H_{m,M_0}$ for large values of $T$ can be computationally expensive.


Taking this into account and following \cite[Section 4.2 and Subsection 5.4.2]{cacam}, we can numerically approximate the region where $H_{m,M}$ is positive for any $T$ given. For example, when $T=1.6$, we obtain the graph in Figure \ref{aproxregion2}.
\vspace{-0.5cm}
\begin{figure}[H]
	\centering
		\includegraphics[width=0.8\textwidth]{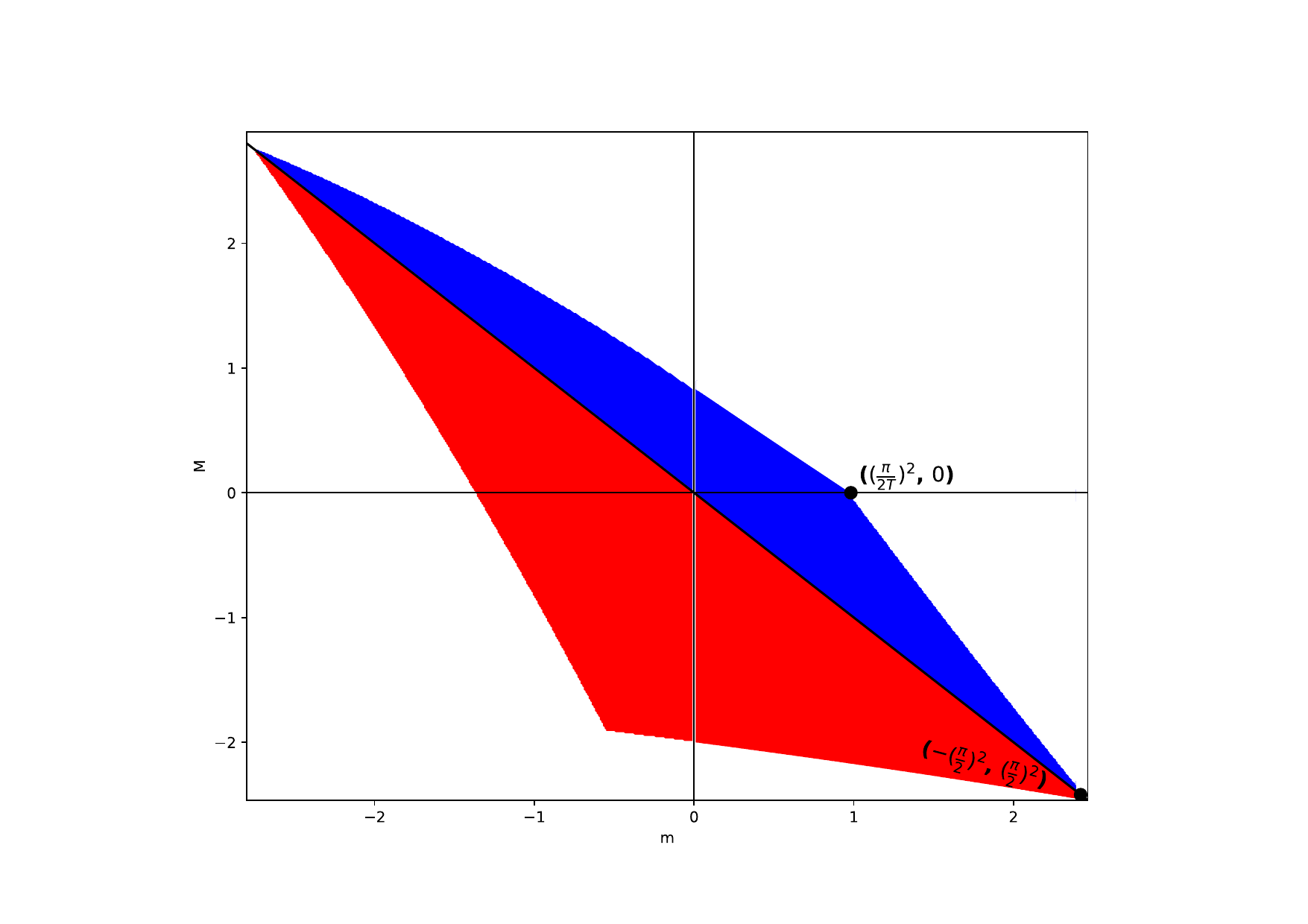}
		\vspace{-0.5cm}
	\caption{Numerical approximation of the regions in which the Green's function $H_{m,M}$ maintains a constant sign. The domain where $H_{m,M}>0$ is depicted in blue, while the domain where $H_{m,M}<0$ is depicted in red.}
	\label{aproxregion2}
\end{figure}

Once we have obtained a numerical approximation of the region of constant sign, we attempt to determine the exact expression of this region. To do so, we need to identify the points $(\hat{t},\hat{s}) \in I \times I$ where the Green's function $H_{m,M}$ attains either its minimum or maximum.

In the case when $m>0$, and taking Proposition \ref{posicion} into account, our numerical explorations suggest that the minimum of $H_{m,M}$ for the largest value of $M$ for which the function remains positive appears to be attained at the point $(T,T) \in I \times I$ when $m \in \left(0,\left(\frac{\pi}{2T} \right)^{2} \right)$, and at $(0,0) \in I \times I$ when $m \in \left(\left(\frac{\pi}{2T} \right)^2,\left(\frac{\pi}{2} \right)^2 \right)$. 

Similarly, when $H_{m,M}$ becomes negative and $m>0$, our computations indicate that its maximum is attained at $(T,0)$ (see Appendix \ref{apend}). 

On the other hand, when $m<0$, the numerical evidence suggests that the minimum of the Green's function $H_{m,M}$, when it is positive, can be attained either at $(0,0) \in I \times I$ or at $(3T/4,3T/4) \in I \times I$, whereas the maximum, when it is negative, appears to occur either at $(T,0) \in I \times I$ or at $(T/2,-T/2) \in I \times I$.

Therefore, by evaluating $H_{m,M}$ at these candidate points, identified through numerical inspection, setting the resulting expressions equal to zero, and solving for $M$ as a function of $m$, we obtain explicit formulas for the regions in which $H_{m,M}$ is positive or negative. This procedure yields the symbolic description of the region.

This analysis leads to the region displayed in Figure \ref{region16bo} for the case $T=1.6$.

\begin{figure}[H]
	\centering
	\includegraphics[width=0.6\textwidth]{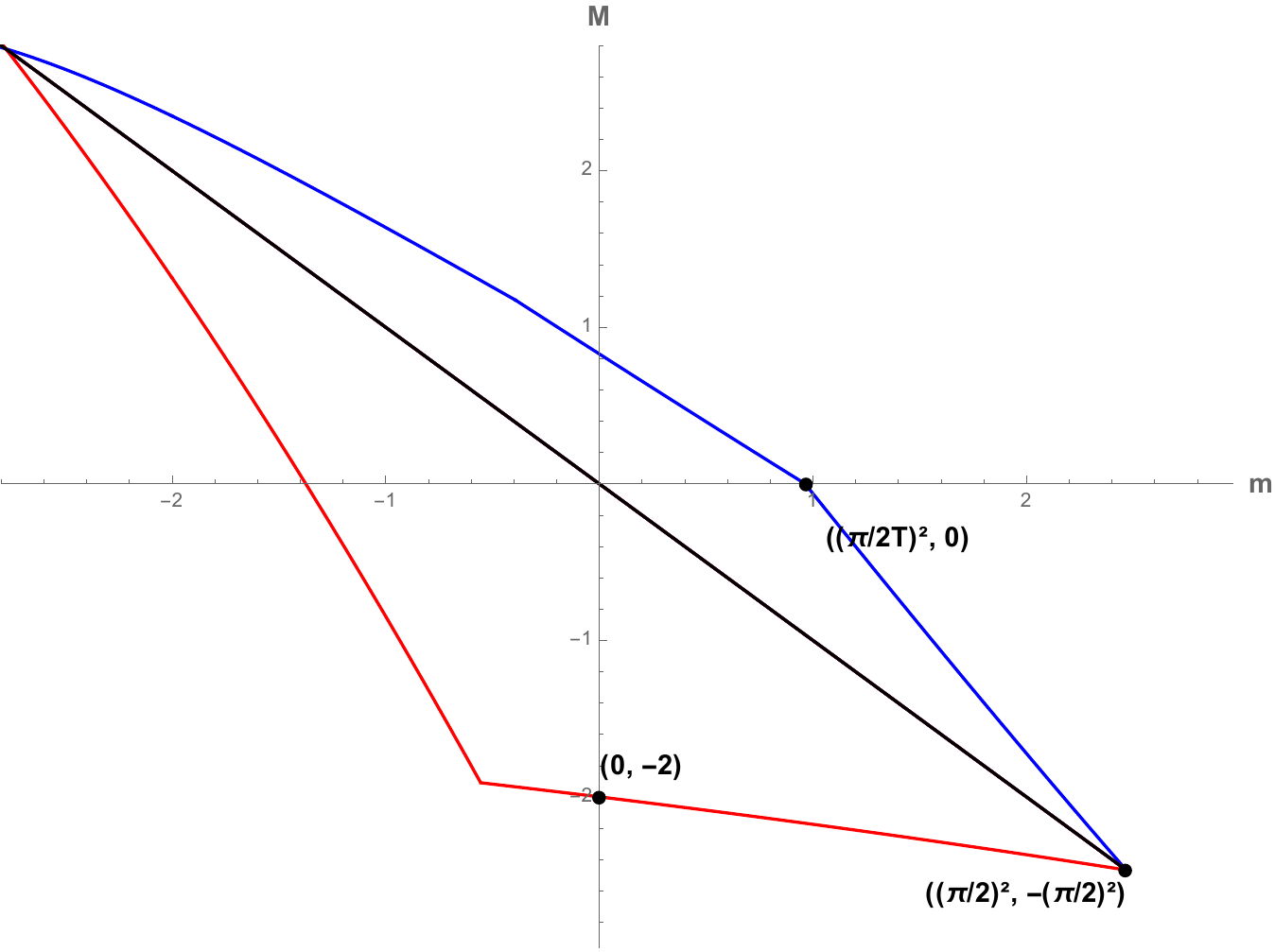}
	\caption{Explicitly conjectured regions in which the Green's function $H_{m,M}$ maintains a constant sign when $T=1.6$. The domain where $H_{m,M}>0$ is depicted between the black and blue curves, while the domain where $H_{m,M}<0$ is shown between the black and red curves.}
	\label{region16bo}
\end{figure}

Visually, we verify that the two regions shown in figures \ref{aproxregion2} and \ref{region16bo} coincide, providing additional evidence supporting the validity of our results.

We can apply the same reasoning to determine the regions in which the Green's function $H_{m,M}$ has a constant sign for other values of $T$.

Moreover, when $T \leq 1$, expression \eqref{htpequeño} provides an explicit formula for $H_{m,M}$, allowing us to work directly with it. Following the approach of Theorem \ref{signog}, that is, computing the partial derivatives of $H_{m,M}$ with respect to $t$ and $s$ and setting them to zero in each case $m \geq 0$ or $m<0$, or simply examining the diagonal $t=s$, one can determine symbolically the points where $H_{m,M}$ attains its minima or maxima, without relying on numerical approximations. This, in turn, makes it possible to obtain with full precision the region where $H_{m,M}$ has constant sign when $T \leq 1$.

For $T<1$, we can easily obtain the explicit expression for the regions of constant sign, taking into account the explicit form of the function $H_{m,M}$ given in equation \eqref{casosencillo}. The function $H_{m,M}$ is positive in the following cases:

\begin{equation*}
-m < M < 
\begin{cases}
	\dfrac{m}{-1 + \sec(\sqrt{m}\, T)}, & m \in \left(0, \left(\frac{\pi}{2T}\right)^2\right), \\[1em]
	\dfrac{2}{T^2}, & m = 0, \\[0.5em]
	\dfrac{m \cosh(\sqrt{-m}\, T)}{1-\cosh(\sqrt{-m}\, T)}, & m \in (\frac{\alpha_2}{T^2},0), \\[0.5em]
	F(m,T), & m \in \left(-\left(\frac{\pi}{T}\right)^2, \frac{\alpha_2}{T^2}\right),
\end{cases}
\end{equation*}
where $F(m,T)$ is given by
%
%
\[
- m - 
\frac{
	m \, \textup{csch}\!\left(\frac{\sqrt{-m} \, T}{4}\right) \, \textup{sech}\!\left(\frac{\sqrt{-m} \, T}{2}\right)
}{
	\coth\!\left(\frac{\sqrt{-m} \, T}{4}\right) - \textup{csch}\!\left(\frac{\sqrt{-m} \, T}{4}\right) \, \textup{sech}\!\left(\frac{\sqrt{-m} \, T}{2}\right) + 
	\tan\!\left(\frac{\sqrt{-m} \, T}{4}\right) + \tan\!\left(\frac{\sqrt{-m} \, T}{2}\right) + \tanh\!\left(\frac{\sqrt{-m} \, T}{2}\right)
}
\]
and $\alpha_2<0$ is the first root of equation
\begin{equation*}
	F\!\left(\alpha_2/T^2, T\right)
	=
	\dfrac{\alpha_2/T^2\cosh\!\left(\sqrt{-\alpha_2}\right)}
	{1-\cosh\!\left(\sqrt{-\alpha_2}\right)}.
\end{equation*}
This root does not depend on the parameter $T$ and is approximately equal to $\alpha_2 \approx -2.091$.

Analogously, the function $H_{m,M}$ is negative in the following cases:
 
 \begin{equation*}
 	-m < M < 
 	\begin{cases}
 		\dfrac{m}{-1+\cos{(\sqrt{m}T)}}, & m \in \left(0, \left(\frac{\pi}{2T}\right)^2\right), \\[1em]
 		-\dfrac{2}{T^2}, & m = 0, \\[0.5em]
 		\dfrac{m}{\cosh{(\sqrt{-m}T)}-1}, & m \in \left(\frac{\alpha_3}{T^2},0\right), \\[0.5em]
 		\frac{m\!\left(\coth\!\big(\tfrac{\sqrt{-m}\,T}{2}\big)-\tan\!\big(\tfrac{\sqrt{-m}\,T}{2}\big)\right)}
 		{-\coth\!\big(\tfrac{\sqrt{-m}\,T}{2}\big)+\textup{csch}\!\big(\tfrac{\sqrt{-m}\,T}{2}\big)+\tan\!\big(\tfrac{\sqrt{-m}\,T}{2}\big)}, & m \in \left(-\left(\frac{\pi}{T}\right)^2, \frac{\alpha_3}{T^2}\right),
 	\end{cases}
 \end{equation*}
 where $\alpha_3<0$ is the first root of the following equation
\begin{equation*}
	\frac{1}{\cosh(\sqrt{-\alpha_3})-1}
	=
	\frac{1\!\left(
		\coth\!\big(\tfrac{\sqrt{-\alpha_3}}{2}\big)
		-
		\tan\!\big(\tfrac{\sqrt{-\alpha_3}}{2}\big)
		\right)}
	{
		-\coth\!\big(\tfrac{\sqrt{-\alpha_3}}{2}\big)
		+
		\textup{csch}\!\big(\tfrac{\sqrt{-\alpha_3}}{2}\big)
		+
		\tan\!\big(\tfrac{\sqrt{-\alpha_3}}{2}\big)
	},
\end{equation*}
from which we deduce that $\alpha_3 \approx -2.693$.
 
 
We represent the region for $T=0.5$ and $T=1$ on Figure \ref{regionvarias}.
\begin{figure}[H]
	\centering
	\includegraphics[width=0.8\textwidth]{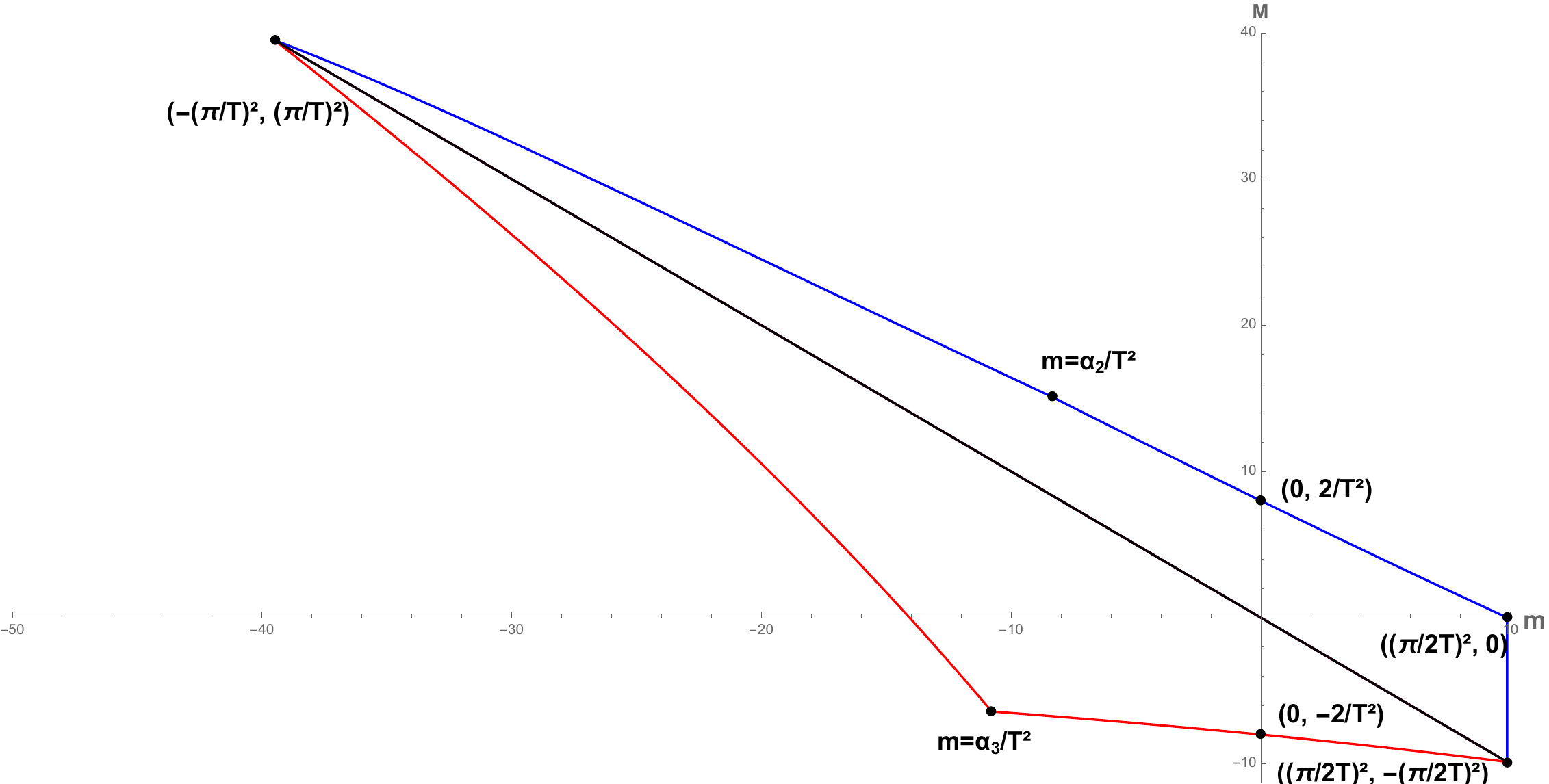} 
	\\
	\textbf{(a)} Regions of constant sign when $T=0.5$.
	\vspace{0.5cm}
	
	\includegraphics[width=0.8\textwidth]{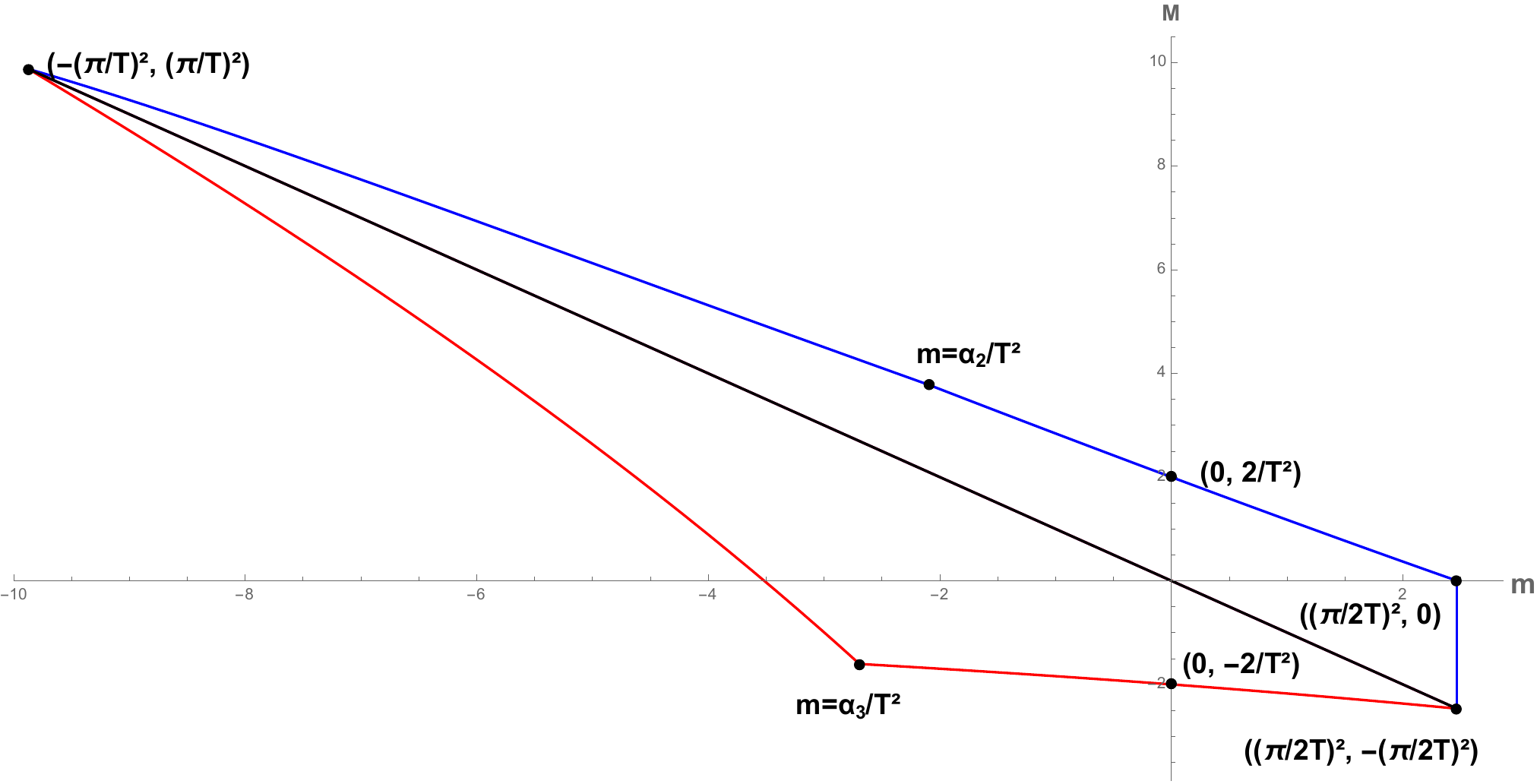} 
	\\
	\textbf{(b)} Regions of constant sign when $T=1$.
	\vspace{0.2cm}
	
	\caption{Representation of the regions of constant sign for the function $H_{m,M}$ for different values of $T$. The region with a positive sign is the one between the blue and black lines, while the region with a negative sign is the one between the black and red lines.}
	\label{regionvarias}
\end{figure}

For $T>1$, it is more difficult to directly obtain the explicit expression for the regions of constant sign. The function $H_{m,M}$ appears to be negative whenever $m \in \left(0, \left(\frac{\pi}{2}\right)^2 \right)$ and $\frac{m}{-1+\cos{(\sqrt{m})}}<M<-m$, or when $m=0$ and $-2<M<0$, regardless of the value of $T$. But such assertion remains as a conjecture.

\section{Eigenvalue characterization for the constant sign Green's function of $H_{m,M}$ when $m \geq 0$ and $M \geq 0$}
\label{autovalores}

In the case where $m \geq 0$ and $M \geq 0$, Proposition \ref{posicion} shows that the minimum of the Green's function $H_{m,M}$ when it is positive must be attained at $t=s$ with $s \in I$. This leads to the following Theorem:
\begin{theorem}
	\label{teoautovalor}
	Let $H_{m,M}$ be the Green's function of Problem \eqref{tot1} and suppose that $m \geq 0$, $M \geq 0$ and $m+M > 0$. Then $H_{m,M}$ is positive if, and only if, $M \in (-m, \lambda_{1})$, where $\lambda_{1}$ is the smallest positive Dirichlet eigenvalue of the problems:
	\begin{equation}
		\begin{aligned}
		z''(t)&=
		\left\{
		\begin{array}{lll}
			-m\, z(-t-2s_0)-M\, z(T+[T+t+s_0]-s_0), & \mbox{\rm if}  & t \in I_{s_0}^{1}, \\
			-m\, z(-t-2s_0)-M\, z(-T+[T+t+s_0]-s_0), & \mbox{\rm if}  & t \in I_{s_0}^{2}, \\
			-m\, z(-t-2s_0)-M\, z(T+[-T+t+s_0]-s_0), & \mbox{\rm if}  & t \in [-s_0,T-2s_0), \\
			-m \,z(-t-2s_0+2T)-M\, z(T+[-T+t+s_0]-s_0), & \mbox{\rm if}  & t \in [T-2s_0,T], \\
		\end{array} 
		\right. \\
		\label{znoentero}
		z(-T)&=z(T)=0,
		\end{aligned}
	\end{equation}
	with $I_{s_0}^1=[-T,\min\{[s_0]+1-s_0-T,-s_0\}\})$ and $I_{s_0}^{2}=[\min\{[s_0]+1-s_0-T\},-s_0)$, if $s_0 \notin \mathbb{N}$. Or, if $s_0 \in \mathbb{N}$,
	\begin{equation}
		\begin{aligned}
	z''(t)&=
	\left\{
	\begin{array}{lll}
		-m\, z(-t-2s_0)-M\, z(-T+[T+t+s_0]-s_0), & \mbox{\rm if}  & t \in [-T,-s_0), \\
		-m \,z(-t-2s_0)-M\, z(T+[-T+t+s_0]-s_0), & \mbox{\rm if}  & t \in [-s_0,T-2s_0), \\
		-m\, z(-t-2s_0+2T)-M\, z(T+[-T+t+s_0]-s_0), & \mbox{\rm if}  & t \in [T-2s_0,T], \\
	\end{array}
	\right. \\
	\label{zentero}
	z(-T)&=z(T)=0.
		\end{aligned}
	\end{equation}
	with $t \in I$ and $s_0 \in [0,T]$.
\end{theorem}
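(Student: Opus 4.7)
The plan is to translate the event that $H_{m,M}$ marginally loses positivity into a Dirichlet eigenvalue problem by ``cutting'' the periodic domain at the point where the Green's function first vanishes. Let $M_{0}$ be the smallest $M>-m$ for which $H_{m,M}$ fails to be strictly positive on $I\times I$. By continuity in $M$, together with the fact that $H_{m,M}$ is positive for $M$ slightly above $-m$ (since then $v\equiv 1/(m+M)$ is a large positive solution for $\sigma\equiv 1$), the Green's function $H_{m,M_{0}}$ must vanish at some $(\hat t,\hat s)$. Proposition~\ref{posicion}(1), under the hypothesis $m\geq 0$, $M\geq 0$, forces this zero to lie on the diagonal, so write $\hat t=\hat s=s_{0}$, and by the symmetry $H_{m,M}(t,s)=H_{m,M}(-t,-s)$ assume $s_{0}\in[0,T]$. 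The slice $z_{0}(t):=H_{m,M_{0}}(t,s_{0})$ then inherits from Proposition~\ref{carah} the conditions $z_{0}(-T)=z_{0}(T)$ and $z_{0}'(-T)=z_{0}'(T)$ and satisfies
\[
z_{0}''(t)+m\,z_{0}(-t)+M_{0}\,z_{0}([t])=0
\]
for $t\neq s_{0}$ and $t\notin D\setminus\{0\}$; moreover $z_{0}\geq 0$ on $I$ with $z_{0}(s_{0})=0$.

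The heart of the argument is to unroll this slice into a Dirichlet problem. Extending $z_{0}$ to a $2T$-periodic function on $\mathbb{R}$ (which is legitimate thanks to the periodic boundary conditions and the $C^{1}$ matching provided by Proposition~\ref{carah}), I define
\[
z(\xi)=\begin{cases} z_{0}(\xi+s_{0}+T), & \xi\in[-T,-s_{0}], \\ z_{0}(\xi+s_{0}-T), & \xi\in[-s_{0},T]. \end{cases}
\]
The two formulas agree at $\xi=-s_{0}$ by periodicity of $z_{0}$, and $z(\pm T)=z_{0}(s_{0})=0$, giving the Dirichlet conditions. I then substitute this change of variables into the equation for $z_{0}$: the chain rule gives $z''(\xi)=z_{0}''(\xi+s_{0}\pm T)$; the reflection term $z_{0}(-t)$ becomes $z(-\xi-2s_{0})$ or $z(-\xi-2s_{0}+2T)$ depending on which branch of the inverse change of variables the reflected argument lands in (the switch occurring at $\xi=T-2s_{0}$, where $-\xi-s_{0}+T$ crosses $s_{0}$); and the piecewise-constant term $z_{0}([t])$ becomes $z(T+[\,\cdot\,]-s_{0})$ or $z(-T+[\,\cdot\,]-s_{0})$ according as the integer $[T+\xi+s_{0}]$ (respectively $[-T+\xi+s_{0}]$) falls below or above $s_{0}$. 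This partitions $[-T,T]$ into the four sub-intervals $I_{s_{0}}^{1}$, $I_{s_{0}}^{2}$, $[-s_{0},T-2s_{0})$, $[T-2s_{0},T]$ exactly as in (\ref{znoentero}), with the threshold $[s_{0}]+1-s_{0}-T$ coming from the inequality $[T+\xi+s_{0}]\leq s_{0}$. When $s_{0}\in\mathbb{N}$ the interval $I_{s_{0}}^{1}$ collapses and the three-piece form (\ref{zentero}) arises.

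For the identification of $M_{0}$ with the smallest positive Dirichlet eigenvalue $\lambda_{1}$, I would use the comparison formula (\ref{relacion}) to show monotone dependence of $H_{m,M}$ on $M$ while the sign is preserved (analogous to Theorem~\ref{signog}, item~5). This forces $M_{0}\geq\lambda_{1}$; conversely, a Dirichlet eigenfunction at $\lambda_{1}$ unrolls into a periodic function with a corner at some $s_{0}$ which, by matching the jump produced by Proposition~\ref{carah}(4), must be a nonzero scalar multiple of $H_{m,\lambda_{1}}(\cdot,s_{0})$, so $H_{m,\lambda_{1}}(s_{0},s_{0})=0$ and hence $M_{0}\leq\lambda_{1}$. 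The main obstacle will be the careful bookkeeping in the change of variables: matching each of the four pieces of (\ref{znoentero}), and in particular tracking which of the two branches of the inverse map applies to the integer arguments of the piecewise-constant term, which is where the threshold $\min\{[s_{0}]+1-s_{0}-T,-s_{0}\}$ emerges.
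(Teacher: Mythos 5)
Your proposal is correct and follows essentially the same route as the paper: localize the zero of $H_{m,M}$ on the diagonal via Proposition \ref{posicion}, unroll the slice $H_{m,M}(\cdot,s_0)$ across the periodic boundary and translate by $T+s_0$ to obtain the Dirichlet problems \eqref{znoentero}--\eqref{zentero}, and identify the critical $M$ with the smallest Dirichlet eigenvalue. Your two-sided argument for that last identification (monotonicity in $M$ via \eqref{relacion} for one inequality, re-rolling a Dirichlet eigenfunction for the other) is in fact more explicit than the paper's, which simply asserts the coincidence.
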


\begin{proof}
	From Proposition \ref{posicion}, it follows that the minimum of the Green's function $H_{m,M}$, when it is positive, is attained at $t=s$ with $s \in I$. However, due to the symmetry of $H_{m,M}$ described in item 7 of Proposition \ref{carah}, the search for the minimum can be restricted to the diagonal $t=s$ with $s \in [0,T]$.
	
	Fixing a value $s_{0} \in [0,T]$, the function $H_{m,M}(t,s_0)$, considered as a function of $t$, is a solution of \eqref{tot1} on the intervals $[-T,s_0)$ and $(s_{0},T]$, and satisfies Property 5 of Proposition \ref{carah}.
	
	Therefore, we can construct
	\begin{equation*}
	w(t)=
	\left\{
	\begin{array}{lll}
		v(t), & \mbox{\rm if}  & t \in [s_{0},T], \\
		v(t-2T), & \mbox{\rm if}  & t \in [T,2T+s_{0}]. 
	\end{array}
	\right.
	\end{equation*}
	
	This function is of class $C^{1}(s_0,2\,T+s_0)$ and, as previously shown, attains its minimum at $s_0$ and at $2\,T+s_0$, where it necessarily must take the value $0$. Moreover, if $s_0 \notin \mathbb{N}$, it must satisfy the following equation:
\begin{equation*}
	\begin{aligned}
	w''(t)&=
	\left\{
	\begin{array}{lll}
		-m\, w(-t+2T)-M \,w([t]+2T), & \mbox{\rm if}  & t \in [s_{0},\min\{[s_{0}]+1,T\}\}), \\
		-m\, w(-t+2T)-M\, w([t]), & \mbox{\rm if}  & t \in [\min\{[s_{0}]+1,T\},T), \\
		-m\, w(-t+2T)-M \,w([t-2T]+2T), & \mbox{\rm if}  & t \in [T,2T-s_{0}), \\
		-m\, w(4T-t)-M \,w([t-2T]+2T), & \mbox{\rm if}  & t \in [2T-s_{0},2T+s_{0}], \\
	\end{array}
	\right. \\
	w(s_0)&=w(2T+s_0)=0.
	\end{aligned}
\end{equation*}

In the case where $s_0 \in \mathbb{N}$, we have that
\begin{equation*}
	\begin{aligned}
	w''(t)&=
	\left\{
	\begin{array}{lll}
		-m\, w(-t+2T)-M\, w([t]), & \mbox{\rm if}  & t \in [s_0,T), \\
		-m\, w(-t+2T)-M\, w([t-2T]+2T), & \mbox{\rm if}  & t \in [T,2T-s_{0}), \\
		-m\, w(4T-t)-M\, w([t-2T]+2T), & \mbox{\rm if}  & t \in [2T-s_{0},2T+s_{0}], \\
	\end{array}
	\right. \\
	w(s_0)&=w(2T+s_0)=0.
	\end{aligned}
\end{equation*}

Next, we perform the translation
\begin{equation*}
	z(t)=w(T+t+s_0).
\end{equation*}

 It is not difficult to verify that function $z$ is of class $C^{1}(I)$ and satisfies equation \eqref{znoentero} if $s_{0} \notin \mathbb{N}$ and equation \eqref{zentero} if $s_{0} \in \mathbb{N}$. Moreover, the minimum of $z$ is attained at $-T$ and at $T$.
 
Therefore, the smallest value of $M(m)$ for which the function $H_{m,M}$ ceases to be positive must satisfy $z(-T)=z(T)=0$ and it coincides with the smallest eigenvalue of the Dirichlet problems \eqref{znoentero} if $s_{0} \notin \mathbb{N}$ or \eqref{zentero} if $s_{0} \in \mathbb{N}$.

\end{proof}

It is useful to analyze separately the particular cases in which either $m=0$ or $M=0$.

In the case $M=0$, the following result holds:

\begin{theorem}
	Let $G_{m}$ be the Green's function of Problem \eqref{simp1} and suppose that $m>0$. Then $G_{m}$ is positive if, and only if, $m \in \left(0, \lambda_{1} \right)$, where $\lambda_{1}= \left(\frac{\pi}{2T} \right)^2$ is the smallest positive Dirichlet eigenvalue of Problem
	\begin{equation}
		z''(t)=-m\,z(-t), \quad t \in I, \quad z(-T)=z(T)=0.
		\label{autovalorm}
	\end{equation} 
\end{theorem}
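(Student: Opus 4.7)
The plan is to combine the sharp sign characterization of $G_m$ already obtained in Theorem \ref{signog} with a direct spectral analysis of the boundary value problem \eqref{autovalorm}. Theorem \ref{signog} establishes that $G_m$ is strictly positive on $I\times I$ precisely when $m\in\bigl(0,(\pi/(2T))^2\bigr)$, so it suffices to prove that the smallest positive Dirichlet eigenvalue of \eqref{autovalorm} equals $\bigl(\pi/(2T)\bigr)^2$.

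To compute the eigenvalues, I would exploit the involutive symmetry of the reflection and decompose any $C^2$ solution $z$ of \eqref{autovalorm} into its even and odd parts
\begin{equation*}
z_e(t)=\tfrac{1}{2}\bigl(z(t)+z(-t)\bigr),\qquad z_o(t)=\tfrac{1}{2}\bigl(z(t)-z(-t)\bigr).
\end{equation*}
Substituting these expressions into $z''(t)=-m\,z(-t)$ and matching parities decouples the single functional equation into the pair
\begin{equation*}
z_e''(t)+m\,z_e(t)=0,\qquad z_o''(t)-m\,z_o(t)=0,
\end{equation*}
while the Dirichlet conditions $z(\pm T)=0$ are equivalent to $z_e(T)=z_o(T)=0$.

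For $m>0$, the odd problem admits only the trivial solution: its general form is $B\sinh(\sqrt{m}\,t)$ and $\sinh(\sqrt{m}\,T)\neq 0$ forces $B=0$. The even problem yields $z_e(t)=A\cos(\sqrt{m}\,t)$, and the boundary condition $\cos(\sqrt{m}\,T)=0$ admits a non-trivial eigenfunction if and only if $\sqrt{m}\,T=\pi/2+k\pi$ for some $k\in\N\cup\{0\}$. The smallest positive eigenvalue is therefore $\lambda_1=\bigl(\pi/(2T)\bigr)^2$, realized by the eigenfunction $\cos\!\bigl((\pi/(2T))\,t\bigr)$, which coincides exactly with the threshold singled out by Theorem \ref{signog} and closes the equivalence.

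No significant obstacle is anticipated: the argument reduces to a routine parity decomposition combined with elementary ODE computations, and the only nontrivial input from the rest of the paper is Theorem \ref{signog}, which is what pins the positivity threshold to the first positive eigenvalue $\lambda_1$.
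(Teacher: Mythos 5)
Your proposal is correct and follows essentially the same route as the paper: both rest on Theorem \ref{signog} to identify $\left(\frac{\pi}{2T}\right)^2$ as the positivity threshold and then identify that value with the first Dirichlet eigenvalue of \eqref{autovalorm}. Your even/odd decomposition simply supplies the explicit eigenvalue computation that the paper dismisses as ``straightforward to verify,'' and it does so correctly.
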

 
\begin{proof}
	On the one hand, by Theorem \ref{signog}, we know that the largest value of $m$ for which the Green's function $H_{0,M}(=G_m)$ remains positive is given by $m= \left(\frac{\pi}{2T} \right)^2$.
	
	On the other hand, it is straightforward to verify that $m=\left(\frac{\pi}{2T}\right)^2$ corresponds to the first eigenvalue of Problem \eqref{autovalorm}.
	
	As a consequence, we deduce that the smallest eigenvalue $\lambda^{s_0}$ for the Dirichlet problems of Theorem \ref{teoautovalor} with $M=0$ is attained at $s_0=T$.
	
	Taking these observations into account, the proof of the theorem is complete.
\end{proof}

Let us now consider the case $m=0$.

%
%

With the help of Wolfram Mathematica, we can obtain the explicit expression of the first Dirichlet eigenvalue of problems \eqref{znoentero} and \eqref{zentero}, when $m=0$ (see Appendix \ref{code}). We observe (see Figure \ref{s48} for the case with $T=4.8$) that these eigenvalues decrease as $s_0$ increases, reaching their minimum at $s_0=T$. 
\begin{figure}[H]
	\centering
	\includegraphics[width=0.6\textwidth]{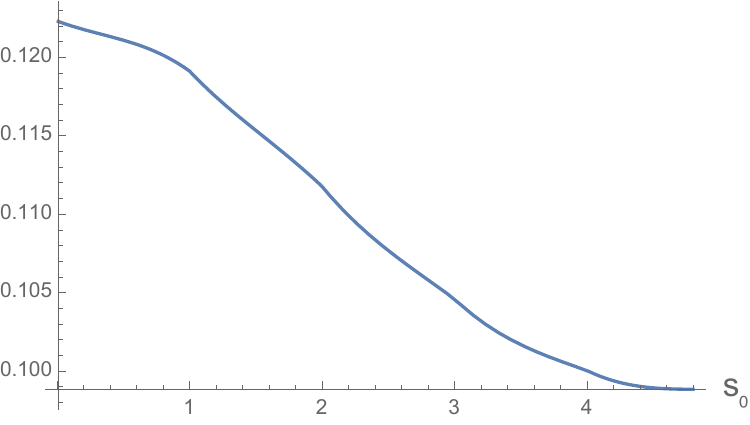}
	\caption{Graphical representation of the monotonic decrease of the first Dirichlet eigenvalue with respect to increasing values of $s_0$ for $T=4.8$.}
	\label{s48}
\end{figure}

For values of $T$ that are not too large, specifically for $T \leq 5$, the program developed in the Appendix \ref{code} allow us to obtain the explicit expression of the eigenvalues $\lambda^{s_0}$. By analyzing this expression, it is easy to prove analytically that the eigenvalue decreases as $s_0$ increases, which shows that for these values of $T$ the minimum is attained at $s_0=T$.

On the other hand, numerical experiments indicate that even for $T \geq 5$, the minimum still appears to be attained at $s_0=T$.

Based on all these observations, we conjecture that:
\begin{equation}
	\lambda_{1} = \min \left\{ \lambda^{s_0} \,\middle|\,
	\begin{array}{l}
		\lambda^{s_0} \text{ is the first eigenvalue of Problem \eqref{znoentero} with $m=0$ if } s_0 \in [0,T] \setminus \mathbb{N}, \\
		\text{or of Problem \eqref{zentero} with $m=0$ if } s_0 \in \mathbb{N} \cap [0,T]
	\end{array}
	\right\}=\lambda^{T}
\end{equation}
or, equivalently, $\lambda_{1}$ is the first Dirichlet eigenvalue of Problem
\begin{equation}
	z''(t)=-Mz([t]), \quad t \in I, \quad z(-T)=z(T)=0.
	\label{zsinxelo}
\end{equation}

 Again, with the help of Wolfram Mathematica, we plot the value of $\lambda_{1}$ as a function of $T$, obtaining the graph shown in Figure \ref{lambdat}. We also provide the explicit expression of $M$ in terms of $T$ for $T \leq 3$ (Table \ref{tab:M_values}). As the value of $T$ increases, the expression becomes progressively more complicated.

\begin{figure}[H]
	\centering
	\includegraphics[width=0.6\textwidth]{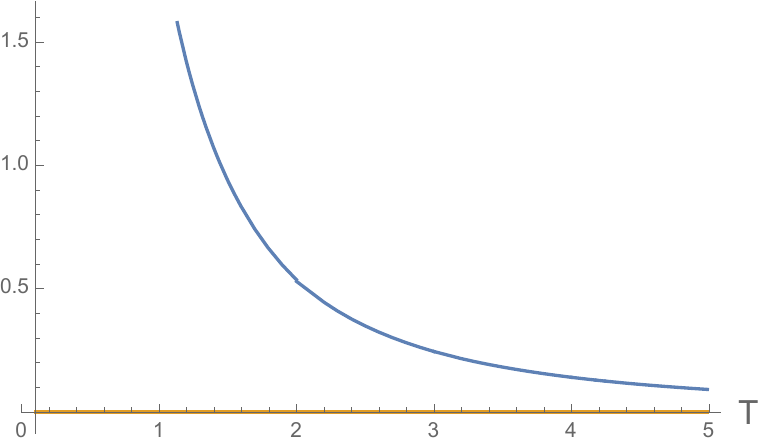}
	\caption{Representation of the eigenvalue $\lambda_1$ as a function of $T$.}
	\label{lambdat}
\end{figure}

\begin{table}[h!]
	\centering
	\begin{tabular}{|c|l|}
		\hline
		\textbf{Interval for $T$} & \textbf{Expression for $M$} \\
		\hline
		$0 < T < 1$ & $\displaystyle M = \frac{2}{T^2}$ \\
		\hline
		$1 < T < 2$ & $\displaystyle M = \frac{T^3 - \sqrt{-4 + 8T - 4T^2 + T^4}}{1 - 2T + T^2}$ \\
		\hline
		$2 < T < 3$ & 
		$\displaystyle M = \frac{1}{24(-2 + T)^2} \left( 208 + 32T(-7 + 2T) - \frac{8i(-i + \sqrt{3}) P(T)}{\Delta^{1/3}} + 8i(i + \sqrt{3}) \Delta^{1/3} \right)$ \\
		\hline
	\end{tabular}
	\caption{Expressions for $M$ depending on the interval of $T$. The definition of $\Delta$ and $P(T)$ is given below.}
	\label{tab:M_values}
\end{table}

\vspace{0.5em}

\noindent\textbf{Definition of $\Delta$ and $P(T)$:}
\vspace{1em}

\resizebox{\textwidth}{!}{$
	\begin{aligned}
		P(T) &= 169 + T(-364 + T(288 + T(-100 + 13T))), \\
		\Delta &= 2413 + T(-7530 + T(9762 + T(-6734 + T(2607 + T(-537 + 46T))))) \\
		&\quad + 3\sqrt{3} \sqrt{(-2 + T)^4 \cdot \big(2305 - T(7314 + T(-9600 + T(6680 + T(-2558 + T(446 + T(25 + 3(-8 + T)T)))))) \big)}
	\end{aligned}.
	$}

\vspace{1em}

Problem \eqref{tot1} with $m=0$ has been studied in several references, such as  \cite{buedo2024boundary, nieto2012second}.
However, in general, the constant sign of the Green's function has not been explicitly characterized. To the best of our knowledge, this is the first work providing a characterization in terms of the eigenvalue of a related Dirichlet problem. This result is particularly relevant, as knowing the exact sign of the Green's function allows the application of fixed-point theorems for the study of nonlinear problems.

On the other hand, by inspecting the graph in Figure \ref{lambdat}, it appears that the eigenvalue $\lambda^T$ decreases as the value of $T$ increases. We will prove this claim, but first we need to state some preliminary results. 

We begin by defining the concept of a positive operator, a cone and a semi-non-supporting operator.

\begin{definition}\cite[Definition 2.4]{Schaefer1974}
	Let $E$, $F$ be ordered vector spaces and let $K_T:E \rightarrow F$ be a linear operator. $K_T$ is called positive (in symbols: $K_T \geq 0$) if $K_T \,x>0$ for all $x>0$.
\end{definition}

\begin{definition}
	Given a Banach space $X$, $K \subset X$ is said to be a cone if it is a closed and convex subset of $X$ and satisfies the following properties.
	
	\begin{itemize}
		\item If $x \in K$, then $\lambda\,x \in K$ for all $\lambda \geq 0$.
		\item $K \cap (-K)=\{0\}$.
	\end{itemize}
\end{definition}

\begin{definition}\cite[Definition 1]{Sawashima1965}
	Let $X$ be a Banach space over $\mathbb{R}$ partially ordered by a cone $K$. A linear operator $K_T$ is a semi-non-supporting operator in $X$ with respect to $K$ if $K_T$ is positive and for each nonzero $x \in K$ and for each nonzero $f \in K^{*}$ (the dual space of $K$) there exists a natural number $n=n(x,f)$ such that $f(K_T^{n}x)>0$.
\end{definition}

The next theorem provides a characterization of semi-non-supporting operators.
\begin{theorem}\cite[Theorem 2]{Sawashima1965} Denoting $\rho(K_T)$ as the spectral radius of operator $K_T$. 
	Let $K_T$ have the resolvent $R(\lambda,K_T)$ with the point $\lambda=\rho(K_T)$ as its pole. Then, the following logical relation holds:
	\begin{equation*}
		K_T \text{ is semi-non-supporting operator} \Leftrightarrow \rho(K_T) > 0
		 \text{ and } K_T \text{ satisfies (A)},
	\end{equation*}
	where the condition $(A)$ is given by
	
	$(A)$ The spectral radius $\rho(K_T)$ is a simple eigenvalue of $K_T$ with a corresponding eigenvector in the interior of the cone $K$. The adjoint operator $K_T^*$ admits a strictly positive eigenfunctional associated with $\rho(K_T)$.
	\label{sawa}
\end{theorem}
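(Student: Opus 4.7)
The plan is to establish the biconditional as two separate implications, adapting the classical Krein--Rutman framework to the weaker positivity assumption of semi-non-supporting operators. Throughout, I will exploit the hypothesis that $\lambda=\rho(K_T)$ is a pole of the resolvent $R(\lambda,K_T)$, which gives access to a finite-dimensional spectral projection.

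For the direction $(A)\Rightarrow$ semi-non-supporting, I would first extract the spectral projection associated with the pole $\lambda=\rho(K_T)$. Writing the Laurent expansion of the resolvent
\[
R(\lambda, K_T) = \frac{P}{\lambda - \rho(K_T)} + (\text{holomorphic part}),
\]
and using that $\rho(K_T)$ is a simple eigenvalue with eigenvector $u_0 \in \mathrm{int}(K)$ and adjoint eigenfunctional $f_0 \in K^*$ strictly positive, one obtains the rank-one formula $Px = (f_0(x)/f_0(u_0))\, u_0$. For any nonzero $x \in K$, strict positivity of $f_0$ yields $f_0(x) > 0$, so $Px \in \mathrm{int}(K)$. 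Combined with the asymptotics $K_T^n x / \rho(K_T)^n \to Px$ and the openness of $\mathrm{int}(K)$, this forces $K_T^n x \in \mathrm{int}(K)$ for all sufficiently large $n$. Then, for any nonzero $f \in K^*$, Hahn--Banach separation guarantees $f(K_T^n x) > 0$, establishing the semi-non-supporting property.

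For the reverse implication, I would first show $\rho(K_T) > 0$: if $\rho(K_T)=0$, then $K_T$ would be quasi-nilpotent and $\|K_T^n x\|^{1/n} \to 0$, so for suitably chosen pairs $(x,f) \in K \times K^*$ the scalar sequence $f(K_T^n x)$ could not be made positive for any $n$, contradicting the hypothesis. Using that $\rho(K_T)$ is a pole, I would construct the spectral projection $P$ as the residue at $\lambda=\rho(K_T)$; positivity of $K_T$ propagates to $P$, so $PK \subset K$. The semi-non-supporting condition then forces $P$ to be of rank one, since a higher-dimensional positive range would admit a nonzero $x \in K$ and a nonzero $f \in K^*$ with $f \circ P = 0$, and hence $f(K_T^n x) \to 0$ in a manner incompatible with eventual positivity. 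Once $P$ is rank one, its range is spanned by an eigenvector $u_0 \in K$, and applying the parallel argument to the adjoint $K_T^*$ on $K^*$ produces the strictly positive eigenfunctional $f_0$.

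The hard part will be establishing simplicity of the pole and interiority of $u_0$ in the reverse direction. Ruling out a pole of order $k \geq 2$ requires a contradiction argument: such a pole would yield polynomial growth $K_T^n x_0 / \rho(K_T)^n \sim n^{k-1}\, v$ for some generalized eigenvector $v$, and pairing with the strictly positive $f_0 \in K^*$ would produce an unbounded positive sequence inconsistent with the algebraic structure dictated by the resolvent expansion. Interiority of $u_0$ must then be extracted from the fact that $f(u_0) > 0$ for \emph{every} nonzero $f \in K^*$, obtained by iterating the semi-non-supporting condition and passing to the limit along $K_T^n x_0/\rho(K_T)^n$; by a standard duality characterization of cones, this strict positivity against every $f \in K^* \setminus \{0\}$ is equivalent to $u_0 \in \mathrm{int}(K)$.
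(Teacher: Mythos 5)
This statement is quoted verbatim from Sawashima \cite[Theorem 2]{Sawashima1965}; the paper gives no proof of it, so there is no internal argument to compare against. Judged on its own, your sketch contains at least one step that is genuinely false and several that are not yet proofs.

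The false step is in the direction $(A)\Rightarrow{}$semi-non-supporting: you claim $K_T^n x/\rho(K_T)^n\to Px$ and deduce $K_T^n x\in\mathrm{int}(K)$ for all large $n$. This convergence requires $\rho(K_T)$ to be the \emph{only} spectral value on the circle $|\lambda|=\rho(K_T)$, which condition $(A)$ does not give. The permutation matrix $\left(\begin{smallmatrix}0&1\\1&0\end{smallmatrix}\right)$ on $\mathbb{R}^2$ with the standard cone satisfies $(A)$ (take $u_0=f_0=(1,1)$), yet for $x=e_1$ the iterates alternate between $e_1$ and $e_2$ and never enter the interior; indeed ``$K_T^n x\in\mathrm{int}(K)$ eventually'' is the strictly stronger \emph{non}-supporting (primitivity-type) property, not the semi-non-supporting one you are asked to prove. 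The correct route is through the resolvent on the real axis: for $\lambda>\rho(K_T)$ one has $f(R(\lambda,K_T)x)=\sum_{n\ge 0}\lambda^{-n-1}f(K_T^n x)$, and $(\lambda-\rho(K_T))R(\lambda,K_T)x\to Px$ with $f(Px)>0$, so the terms $f(K_T^n x)$ cannot all vanish.

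In the reverse direction the argument is only a program. The assertion that positivity of $K_T$ passes to the residue $P$ presupposes that the pole is simple (only the leading Laurent coefficient $\lim_{\lambda\downarrow\rho}(\lambda-\rho)^kR(\lambda,K_T)$ is a limit of positive operators), so you cannot use it before simplicity is established. The rank-one claim rests on ``$f(K_T^n x)\to 0$ in a manner incompatible with eventual positivity,'' but decay to zero is entirely compatible with every term being strictly positive, so this rules out nothing; likewise the exclusion of poles of order $k\ge 2$ and the interiority of $u_0$ are asserted rather than derived. These are exactly the delicate points of Sawashima's and Marek's arguments, and as written your proposal does not close them. Since the paper uses this theorem as an imported black box, the appropriate course is either to cite it as the authors do or to supply the full resolvent-based argument sketched above.
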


Finally, we present the following theorem, which will be used in the proof.
\begin{theorem}\cite[Theorem 4.3]{marek1970frobenius}
Let $K \subset X$ be a closed cone, and let $K_{T_1}, K_{T_2}: X \to X$ be positive bounded linear operators. Let $K_{T_2}$ be a semi-non-supporting operator, let $\rho(K_{T_2})$ be an eigenvalue of $K_{T_2}$ and $K_{T_2}^{*}$ with an eigenvector $x_0 \in K$ and eigenfunctional $x_0' \in K^{*}$, and let $\rho(K_{T_1})$ be an eigenvalue of the adjoint operator $K_{T_1}^{*}$ with an eigenfunctional $x_1' \in K^{*}$. If 
\begin{equation*}
	K_{T_2}\ge K_{T_1}, K_{T_2} \neq K_{T_1},
\end{equation*}
then
\begin{equation*}
	\rho(K_{T_1})<\rho(K_{T_2}).
\end{equation*}
\label{marek}
\end{theorem}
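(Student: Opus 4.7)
The plan is to prove Theorem \ref{marek} by a classical Krein-Rutman / Frobenius-Perron pairing argument. Writing $\rho_i := \rho(K_{T_i})$ and inserting the decomposition $K_{T_2} = K_{T_1} + (K_{T_2} - K_{T_1})$ into the scalar $\langle x_1', K_{T_2} x_0\rangle$, the eigenvalue equations $K_{T_2} x_0 = \rho_2 x_0$ and $K_{T_1}^* x_1' = \rho_1 x_1'$ give the key identity
\[
(\rho_2 - \rho_1)\,\langle x_1', x_0 \rangle \;=\; \langle x_1', (K_{T_2} - K_{T_1}) x_0\rangle,
\]
so the whole argument reduces to analyzing the signs of the two scalars appearing here.

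To show $\langle x_1', x_0\rangle > 0$, I would invoke the semi-non-supporting property of $K_{T_2}$ applied to $x_0 \in K\setminus\{0\}$ and $x_1' \in K^*\setminus\{0\}$: there exists $n$ with $\rho_2^n\,\langle x_1', x_0\rangle = \langle x_1', K_{T_2}^n x_0\rangle > 0$, and since Theorem \ref{sawa} yields $\rho_2 > 0$, we conclude $\langle x_1', x_0\rangle > 0$. The right-hand side of the pairing identity is nonnegative because $K_{T_2} - K_{T_1}$ is a positive operator, $x_0 \in K$, and $x_1' \in K^*$. This immediately delivers the weak inequality $\rho_2 \geq \rho_1$.

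For the strict inequality, I would argue by contradiction, assuming $\rho_1 = \rho_2 =: \rho$. Then $\langle x_1', (K_{T_2} - K_{T_1}) x_0\rangle = 0$, and by the telescoping expansion
\[
K_{T_2}^n - K_{T_1}^n \;=\; \sum_{k=0}^{n-1} K_{T_2}^k\,(K_{T_2} - K_{T_1})\,K_{T_1}^{n-1-k},
\]
the same pairing with $x_0$ forces $\langle x_1', K_{T_2}^k(K_{T_2} - K_{T_1})K_{T_1}^{n-1-k} x_0\rangle = 0$ for all admissible $k, n$, so $x_1'$ annihilates a large family of nonnegative vectors. One should then combine this with the strictly positive eigenfunctional $x_0'$ of $K_{T_2}^*$ guaranteed by condition (A) of Theorem \ref{sawa}, together with the simplicity of $\rho$ as an eigenvalue of $K_{T_2}$ and the fact that $x_0$ is in the interior of $K$, to conclude $(K_{T_2} - K_{T_1}) x_0 = 0$ and, more generally, that $K_{T_2}$ and $K_{T_1}$ coincide on a dense $K_{T_2}$-invariant subspace. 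This would contradict the hypothesis $K_{T_2}\neq K_{T_1}$.

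The main obstacle is precisely this last step: bootstrapping the scalar identity $\langle x_1', (K_{T_2} - K_{T_1}) x_0\rangle = 0$ into the operator equality $K_{T_2} = K_{T_1}$. The delicacy is that $x_1'$ is the eigenfunctional of $K_{T_1}^*$, not of $K_{T_2}^*$, so strict positivity of $x_1'$ cannot be invoked directly; instead one must route through the strictly positive eigenfunctional and interior eigenvector of the semi-non-supporting operator $K_{T_2}$ supplied by condition (A) of Theorem \ref{sawa}. Setting up this interchange cleanly, and extending the pointwise equality on $x_0$ to equality on a dense set via iteration with $K_{T_2}$, is the technical heart of the proof.
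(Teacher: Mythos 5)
This statement is not proved in the paper at all: it is quoted verbatim as \cite[Theorem 4.3]{marek1970frobenius} and used as a black box, so there is no internal proof to compare against. Judged on its own terms, your proposal establishes the weak inequality correctly: the pairing identity $(\rho_2-\rho_1)\langle x_1',x_0\rangle=\langle x_1',(K_{T_2}-K_{T_1})x_0\rangle\ge 0$ together with $\langle x_1',x_0\rangle>0$ (obtained, exactly as you say, by applying the semi-non-supporting property of $K_{T_2}$ to the pair $(x_0,x_1')$ and using $K_{T_2}^n x_0=\rho_2^n x_0$) gives $\rho_1\le\rho_2$.

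The strict inequality, however, is the entire content of the theorem, and there you have a genuine gap which you yourself flag as the ``main obstacle.'' Your plan --- bootstrap $\langle x_1',(K_{T_2}-K_{T_1})x_0\rangle=0$ into the operator identity $K_{T_2}=K_{T_1}$ --- cannot work as stated, because $x_1'$ is only a positive (not strictly positive) eigenfunctional of $K_{T_1}^*$, so the vanishing of one scalar pairing never forces $(K_{T_2}-K_{T_1})x_0=0$, and the telescoping expansion you write only reproduces multiples of the same scalar $\langle x_1',(K_{T_2}-K_{T_1})x_0\rangle$ once you use the eigenvalue equations on both sides. The standard argument runs in the opposite direction: set $D:=K_{T_2}-K_{T_1}\ge 0$, $D\neq 0$; condition (A) of Theorem \ref{sawa} makes $x_0$ a quasi-interior point of $K$, whence $Dx_0\neq 0$ (a positive operator annihilating a quasi-interior point is zero); apply the semi-non-supporting property a \emph{second} time, now to the pair $(Dx_0,x_1')$, to get $n$ with $\langle x_1',K_{T_2}^{n}Dx_0\rangle>0$; finally use the telescoping identity in the order
\begin{equation*}
K_{T_2}^{N}-K_{T_1}^{N}=\sum_{k=0}^{N-1}K_{T_2}^{k}\,D\,K_{T_1}^{N-1-k},
\end{equation*}
pair with $x_1'$ and $x_0$, discard the nonnegative terms with $k<N-1$, and take $N=n+1$ to obtain $(\rho_2^{n+1}-\rho_1^{n+1})\langle x_1',x_0\rangle\ge\langle x_1',K_{T_2}^{n}Dx_0\rangle>0$, hence $\rho_2>\rho_1$. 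Without this (or an equivalent) completion, your proposal proves only $\rho_1\le\rho_2$.
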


Having established the above, we are now in a position to state the following result.
\begin{proposition}
	Let $\lambda^T$ denote the first (smallest) positive eigenvalue of Problem \eqref{zsinxelo} posed on the interval $I$. Then the map
	\begin{equation*}
		T \longrightarrow \lambda^T
	\end{equation*}
	is decreasing: if $0 <T_1<T_2$ then
	\begin{equation*}
		\lambda^{T_2} < \lambda^{T_1}.
	\end{equation*}
\end{proposition}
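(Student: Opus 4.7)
The plan is to reduce Problem \eqref{zsinxelo} to a finite-dimensional linear algebra problem and then apply Marek's comparison principle (Theorem \ref{marek}) to the resulting pair of nonnegative matrices.

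First, I would rewrite \eqref{zsinxelo} using the Dirichlet Green's function
$$\mathcal{G}_T(t,s)=\frac{1}{2T}\bigl(T-\max\{t,s\}\bigr)\bigl(T+\min\{t,s\}\bigr)$$
of $-z''$ on $[-T,T]$, obtaining the fixed-point form $z=M\,\mathcal{K}_T z$ with $\mathcal{K}_T z(t)=\int_{-T}^{T}\mathcal{G}_T(t,s)\,z([s])\,ds$. Since $z([s])$ depends only on the finite collection $\{z(k)\}_{k\in J_T}$, where $J_T$ is the set of integers $k$ for which $\{s\in[-T,T]:[s]=k\}$ has positive measure, the operator $\mathcal{K}_T$ has finite rank and its nonzero spectrum coincides with that of the nonnegative matrix $A_T=\bigl((A_T)_{m,k}\bigr)_{m,k\in J_T}$ defined by $(A_T)_{m,k}=\int_{I_k^T}\mathcal{G}_T(m,s)\,ds$, where $I_k^T$ is the subinterval of $[-T,T]$ on which $[s]\equiv k$. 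Consequently $\lambda^T=1/\rho(A_T)$, so the proposition is equivalent to $\rho(A_{T_1})<\rho(A_{T_2})$.

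Second, since $J_{T_1}\subseteq J_{T_2}$, I would embed $A_{T_1}$ into $\mathbb{R}^{J_{T_2}}$ by zero-extension, obtaining a matrix $B$ with $\rho(B)=\rho(A_{T_1})$, and compare it entrywise with $A_{T_2}$ using two observations: (a) $I_k^{T_1}\subseteq I_k^{T_2}$ for every $k\in J_{T_1}$, since enlarging $T$ can only enlarge $\{[s]=k\}\cap[-T,T]$; and (b) the explicit formula for $\mathcal{G}_T$ yields
$$\partial_T\mathcal{G}_T(t,s)=\frac{T^2+\max\{t,s\}\min\{t,s\}}{2T^2}>0\qquad\text{whenever }|t|,|s|\leq T_1<T,$$
so $\mathcal{G}_{T_2}(m,s)>\mathcal{G}_{T_1}(m,s)$ throughout the relevant range. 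Together, (a) and (b) give $(A_{T_2})_{m,k}>(A_{T_1})_{m,k}=B_{m,k}$ for $m,k\in J_{T_1}$, while the remaining entries of $B$ vanish and those of $A_{T_2}$ remain nonnegative. Hence $A_{T_2}\geq B$ entrywise and $A_{T_2}\neq B$.

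Third, I would verify the spectral hypotheses of Theorem \ref{marek}. Every entry of $A_{T_2}$ is in fact strictly positive, because $\mathcal{G}_{T_2}(m,\cdot)>0$ on $(-T_2,T_2)$ for each $m\in J_{T_2}$ (all of which satisfy $|m|<T_2$) and each $I_k^{T_2}$ has positive Lebesgue measure inside that open interval. Consequently $A_{T_2}$ is primitive, hence semi-non-supporting relative to the standard nonnegative cone of $\mathbb{R}^{J_{T_2}}$, and Perron--Frobenius supplies $\rho(A_{T_2})$ as a simple eigenvalue of both $A_{T_2}$ and $A_{T_2}^{\top}$ with strictly positive eigenvectors in the interior of the cone. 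For $B$, the Perron eigenvector of $A_{T_1}^{\top}$, extended by zero, is a nonnegative eigenfunctional of $B^{\top}$ with eigenvalue $\rho(B)=\rho(A_{T_1})$. Thus Theorem \ref{marek} applies and yields $\rho(B)<\rho(A_{T_2})$, i.e., $\rho(A_{T_1})<\rho(A_{T_2})$, equivalent to $\lambda^{T_2}<\lambda^{T_1}$. The main obstacle I anticipate is the finite-dimensional bookkeeping in the definitions of $J_T$ and $I_k^T$, in particular separating the cases in which $T$ is or is not an integer, since at integer $T=N$ the outermost intervals $\{[s]=\pm N\}\cap[-N,N]$ degenerate to the single points $\{\pm N\}$ and the corresponding integers drop out of $J_T$; in either case, however, the resulting $A_T$ has strictly positive entries, so the spectral comparison above goes through unchanged.
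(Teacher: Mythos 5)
Your argument is correct, and it follows the same skeleton as the paper's proof — rewrite \eqref{zsinxelo} as a fixed-point equation for the composition of the Dirichlet Green operator of \eqref{ejgren} with $u\mapsto u([\cdot])$, observe that the Dirichlet Green's function is pointwise increasing in $T$, and conclude via the comparison principle of Theorem \ref{marek}. Where you genuinely diverge is in the execution: the paper keeps the operator $K_T=Gd_T\circ B$ infinite-dimensional on $C(I)$, invokes Krein--Rutman for $M(T)=1/\rho(K_T)$, and then asserts (without detail) that $K_T$ satisfies the conditions of Theorem \ref{sawa} and that the hypotheses of Theorem \ref{marek} hold; you instead exploit the finite rank of the operator to reduce everything to the strictly positive matrices $A_T$, so that semi-non-supportingness becomes primitivity, the required eigenvector/eigenfunctional data come from classical Perron--Frobenius, and the smallest positive $M$ is transparently $1/\rho(A_T)$. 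Your version also repairs two points the paper leaves implicit: the operators for $T_1$ and $T_2$ live on different spaces, which you handle cleanly by zero-extension of $A_{T_1}$ into $\mathbb{R}^{J_{T_2}}$ (the paper only says ``$Gd_{T_2}$ restricted to $(-T_1,T_1)$ is greater than $Gd_{T_1}$''), and you verify explicitly that $\partial_T\mathcal{G}_T(t,s)=(T^2+ts)/(2T^2)>0$ on the relevant range, together with the bookkeeping for integer $T$ where the outermost cells degenerate. The trade-off is that the paper's operator-theoretic formulation would survive perturbations that destroy the finite-rank structure (e.g.\ adding the reflection term $m\,z(-t)$), whereas your matrix reduction is specific to the case $m=0$ of \eqref{zsinxelo}; for the statement as posed, your route is the more elementary and the more complete of the two.
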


\begin{proof}
	Let $Gd_{T}$ denote the Green operator related to the problem
	\begin{equation}
		-u''(t)= \sigma(t), \quad t \in I, \quad u(-T)=u(T)=0,
		\label{ejgren}
	\end{equation}
	that is, $Gd_T:C(I) \rightarrow C(I)$ denotes the operator that assigns to each $\sigma \in C(I)$ the unique solution $u \in C^2(I) \subset C(I)$ of Problem \eqref{ejgren}. Moreover, let $B: C(I) \rightarrow PC(I)$ be the operator defined by $(Bu)(t):=u([t])$, where $PC(I)$ denotes the space of functions that are constant on each interval of the form $[[t],[t]+1)$.
	
	Then, the Problem \eqref{zsinxelo} can be written in operator form as 
	\begin{equation*}
		u=M \, Gd_{T}(Bu).
	\end{equation*}
	In particular, $M$ is an eigenvalue of the original problem if and only if $1/M$ is a nonzero eigenvalue of the operator $K_{T}:=Gd_{T} \circ B$. Since $Gd_{T}$ is compact and $B$ is continuous, the composition $K_{T}$ is a compact operator.
	
	From the expression of the Green's function $gd_{T}$ (the kernel of the operator $Gd_{T}$) related to Problem \eqref{ejgren}, it is easy to see that $gd_{T}$ is strictly positive for all $t,\, s \in (-T,T)$ and that it is strictly increasing with respect to the parameter $T$. Specifically, if $T_2>T_1$ then
	\begin{equation*}
		gd_{T_2}(t,s) > gd_{T_{1}}(t,s) \textup{ for all }t,s \in (-T_1,T_1).
	\end{equation*}
	Based on the above, we have $Gd_{T_2}$ restricted to $(-T_1,T_1)$ is greater than $Gd_{T_1}$ in the usual operator order.
	
	The operator $B$ preserves the positivity of the cone of non-negative functions. Thus, each $K_T$ is a compact and positive operator. 
	Then, by the Krein-Rutman Theorem \cite{KreinRutman}
	\begin{equation*}
		M(T)=\frac{1}{\rho(K_{T})}.
	\end{equation*}
	
	Furthermore, it is straightforward to verify that $K_T$ satisfies the conditions of Theorem \ref{sawa}, which implies that $K_T$ is a semi-non-supporting operator. In addition, the hypotheses of Theorem \ref{marek} are fulfilled, which implies that
\begin{equation*}
	\rho(K_{T_2}) > \rho(K_{T_1}).
\end{equation*}
As a consequence,
\begin{equation*}
	M(T_2)= \frac{1}{\rho(K_{T_2})} < \frac{1}{\rho(K_{T_1})}=M(T_1).
\end{equation*}
This proves that $M(T)$ is strictly decreasing with respect to $T$, completing the proof.
\end{proof}

As a corollary of Theorem \ref{teoautovalor}, the following result, valid for $T \leq 1$, is also obtained.

\begin{corollary}
	\label{coltmenor}
	Let $H_{m,M}$ be the Green's function related to Problem \eqref{tot1} and suppose that $m \geq 0$, $M \geq 0$, $m+M > 0$ and $T \in (0,1]$. Then $H_{m,M}$ is positive if, and only if, $M \in (-m, \lambda_{1})$ where $\lambda_{1}=\frac{m}{-1+\sec{\sqrt{m}T}}$ is the first Dirichlet eigenvalue of problem:
	\begin{equation}
		z''(t)=-mz(-t)-Mz([t]), \quad t \in I, \quad z(-T)=z(T)=0.
		\label{ausim}
	\end{equation}
\end{corollary}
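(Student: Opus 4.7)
The strategy is to identify $\lambda_1 = m/(-1+\sec(\sqrt{m}\,T))$ as the first positive Dirichlet eigenvalue of problem~\eqref{ausim} by a direct computation, and then to transfer the positivity/eigenvalue equivalence from Theorem~\ref{teoautovalor}. The plan rests on the observation that for $T\in(0,1]$ one has $I\subset[-1,1]$, so $[t]=0$ for every $t\in(-1,1)$; together with the homogeneous Dirichlet conditions $z(\pm T)=0$, problem~\eqref{ausim} reduces almost everywhere on $I$ to
\begin{equation*}
z''(t)+m\,z(-t)+M\,z(0)=0,\qquad z(-T)=z(T)=0.
\end{equation*}

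To compute the eigenvalue I would decompose $z=p+q$ into its even part $p(t)=\tfrac{1}{2}(z(t)+z(-t))$ and odd part $q(t)=\tfrac{1}{2}(z(t)-z(-t))$. Since $z(0)=p(0)$, matching even and odd components splits the reduced equation into the decoupled system
\begin{equation*}
p''(t)+m\,p(t)=-M\,p(0),\qquad q''(t)-m\,q(t)=0,
\end{equation*}
with boundary conditions $p(T)=q(T)=0$ (obtained by combining $z(T)=0$ and $z(-T)=0$). For $m>0$ the odd solutions of the second equation have the form $q(t)=D\sinh(\sqrt{m}\,t)$, and $q(T)=0$ forces $D=0$, hence $q\equiv 0$. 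Writing $c=p(0)$, the even solution of the first equation is $p(t)=A\cos(\sqrt{m}\,t)-Mc/m$ with $A=c(m+M)/m$ coming from $p(0)=c$; the condition $p(T)=0$ then yields the compatibility relation $(m+M)\cos(\sqrt{m}\,T)=M$. Discarding the trivial case $c=0$, which would force $z\equiv 0$, one obtains the unique positive value
\begin{equation*}
M=\frac{m\cos(\sqrt{m}\,T)}{1-\cos(\sqrt{m}\,T)}=\frac{m}{-1+\sec(\sqrt{m}\,T)}
\end{equation*}
in the range $m\in(0,(\pi/(2T))^2)$, so this is the first positive Dirichlet eigenvalue $\lambda_1$. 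The case $m=0$ follows by direct integration and yields $\lambda_1=2/T^2$, which is consistent with the limit $\lim_{m\to 0^+}m/(-1+\sec(\sqrt{m}\,T))=2/T^2$.

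To close the equivalence I would invoke Theorem~\ref{teoautovalor}: for $T\in(0,1]$ the family of problems~\eqref{znoentero}--\eqref{zentero} collapses, after the translation $z(t)=w(T+t+s_0)$ and again using $[t]=0$ on $(-1,1)$, to the single problem~\eqref{ausim}, so the two notions of $\lambda_1$ coincide. I expect the main technical obstacle to be precisely this collapse, which must be checked for every $s_0\in[0,T]$ (including the integer endpoints $s_0=0$, and $s_0=1$ when $T=1$) to ensure that the piecewise definitions genuinely reduce to~\eqref{ausim} rather than to some related but different eigenvalue problem. As a cross-check, the closed form~\eqref{casosencillo} of $H_{m,M}$ available for $T\le 1$ directly reproduces the positivity region already derived earlier in the paper, and its boundary matches the $\lambda_1$ computed above.
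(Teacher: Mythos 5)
Your even/odd splitting of \eqref{ausim} (valid for $T\in(0,1]$ because $[t]=0$ a.e.\ on $I$) is a correct and clean way to obtain $\lambda^{T}=m/(-1+\sec(\sqrt{m}\,T))$ as the first Dirichlet eigenvalue of \eqref{ausim}; the paper merely asserts this value. The gap is in the step you yourself flag as the "collapse". For $T\in(0,1]$ the family of problems in Theorem \ref{teoautovalor} does \emph{not} reduce to the single problem \eqref{ausim}: the problem attached to a given $s_0\in[0,T]$ reads
\begin{equation*}
z''(t)=
\begin{cases}
-m\,z(-t-2s_0)-M\,z(T-s_0), & t\in[-T,\,T-2s_0),\\
-m\,z(-t-2s_0+2T)-M\,z(T-s_0), & t\in[T-2s_0,\,T],
\end{cases}
\qquad z(-T)=z(T)=0,
\end{equation*}
so both the reflection argument and the evaluation point $T-s_0$ of the piecewise constant term move with $s_0$. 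These are genuinely different eigenvalue problems with different first eigenvalues $\lambda^{s_0}$; only $s_0=T$ yields \eqref{ausim}.

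Since Theorem \ref{teoautovalor} characterizes positivity of $H_{m,M}$ by $M<\lambda_1:=\min_{s_0\in[0,T]}\lambda^{s_0}$, and a priori $\lambda_1\le\lambda^{T}$, your argument gives the necessity direction ($H_{m,M}$ positive $\Rightarrow M<\lambda^{T}$) for free, but the sufficiency ($M<\lambda^{T}\Rightarrow H_{m,M}$ positive) requires proving $\lambda^{s_0}\ge\lambda^{T}$ for every $s_0\in[0,T]$. That comparison is the actual content of the paper's proof: it computes $\lambda^{s_0}$ in closed form as a function of $s_0$ and checks, via its derivative, that the minimum over $[0,T]$ is attained at $s_0=T$ when $m\in\bigl(0,(\pi/(2T))^2\bigr)$. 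Your proposal omits this minimization entirely, so the claimed equality $\lambda_1=\lambda^{T}$ is unproved. (Your parenthetical cross-check via the explicit formula \eqref{casosencillo} could in principle be upgraded to an independent proof of the positivity region for $T\le 1$, but as written it is not developed.)
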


\begin{proof}
	According to Theorem \ref{teoautovalor}, we know that $\lambda_1$ is the smallest Dirichlet eigenvalue among the following problems:
	\begin{equation}
		z''(t)=
		\left\{
		\begin{array}{lll}
			-m\,z(-t-2s_0)-M \,z(T-s_0), & \mbox{\rm if}  & t \in [-T,T-2s_0), \\
			-m\,z(-t-2s_0+2T)-M\, z(T-s_0), & \mbox{\rm if}  & t \in [T-2s_0,T]. \\
		\end{array}
		\right.
	\end{equation}
	We calculate the Dirichlet eigenvalues $\lambda^{s_0}$ associated with the aforementioned problem as a function of $s_0$, and find that
	
	\vspace{1em} 
	\resizebox{\textwidth}{!}{$
	\begin{aligned}
		\lambda^{s_0} = m \left( 
		- \frac{1}{
			\sinh\left( \sqrt{m} s_0 \right) \sin\left( \sqrt{m} T \right) 
			\text{csch}\left( \sqrt{m} T \right) \sec\left( \sqrt{m} (s_0 - T) \right) 
			\sinh\left( \sqrt{m} (s_0 - T) \right) 
			+ \cos\left( \sqrt{m} s_0 \right) - 1} - 1 \right)
	\end{aligned}
	$}
	\vspace{1em} 
	
	By analyzing the behavior of the expression for $\lambda^{s_0}$ and its derivative with respect to $s_0$, it can be seen that when $m \in \left( 0,\left(\frac{\pi}{2T}\right)^2\right)$, the eigenvalue $\lambda^{s_0}$ attains its minimum at $s_0=T$. Consequently, we obtain $$\lambda_1=\lambda^T=\frac{m}{-1+\sec{\sqrt{m}T}},$$
	which is the first Dirichlet eigenvalue of Problem \eqref{ausim}.
\end{proof}

\begin{remark}
	Numerical evidence suggests that Corollary \ref{coltmenor} may remain valid for values of $T>1$. Based on this observation, we propose the following conjecture:
	\begin{conjecture}
	Let $H_{m,M}$ be the Green's function of Problem \ref{tot1} and suppose that $m \geq 0$, $M \geq 0$, $m+M > 0$. Then $H_{m,M}$ is positive if, and only if, $M \in (-m, \lambda_{1})$ where $\lambda_{1}$ is the first Dirichlet eigenvalue related to problem:
	\begin{equation*}
		z''(t)=-mz(-t)-Mz([t]), \quad t \in I, \quad z(-T)=z(T)=0.
	\end{equation*}
	\end{conjecture}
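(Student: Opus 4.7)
The plan is to reduce the conjecture to the characterization already provided by Theorem \ref{teoautovalor}. That theorem identifies the critical value of $M$ at which $H_{m,M}$ ceases to be positive as
\[
\lambda_1 \;=\; \min_{s_0\in[0,T]} \lambda^{s_0},
\]
where $\lambda^{s_0}$ is the first Dirichlet eigenvalue of the translated problem \eqref{znoentero} (resp.\ \eqref{zentero}). A direct substitution shows that at $s_0=T$ the translated problem collapses exactly to \eqref{ausim}. Hence the conjecture will be established once one proves that the map $s_0\mapsto\lambda^{s_0}$ attains its minimum at the endpoint $s_0=T$ for every $T>0$. This is precisely what was done by explicit computation in Corollary \ref{coltmenor} under the assumption $T\le 1$; the difficulty for general $T$ is that the closed form of $\lambda^{s_0}$ becomes intractable because the number of break-points of $[t]$ inside $I$ grows with $T$.

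My first step would be to recast, for each fixed $s_0\in[0,T]$, the Dirichlet problem in Theorem \ref{teoautovalor} as a positive compact eigenvalue problem $u=\lambda^{s_0}\, K_{s_0} u$ on $C(I)$, where $K_{s_0}=Gd_T\circ \mathcal{R}_{s_0}$, with $Gd_T$ the Dirichlet Green operator from the monotonicity-in-$T$ proof and $\mathcal{R}_{s_0}$ the linear operator that assembles the right-hand side of \eqref{znoentero}/\eqref{zentero} (a combination of a reflection about $-2s_0$ and a piecewise-constant evaluation suitably shifted by $s_0$). Each $K_{s_0}$ is compact and positive on the natural cone of nonnegative functions, and verifying the hypotheses of Theorem \ref{sawa} exactly as in the proof of monotonicity in $T$ shows that $K_{s_0}$ is semi-non-supporting, so Krein--Rutman gives a simple, positive, dominant eigenvalue $\rho(K_{s_0})=1/\lambda^{s_0}$. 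The problem therefore reduces to proving that $\rho(K_{s_0})$ is maximized at $s_0=T$.

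The key and hardest step is a monotone comparison: I would try to establish $K_T\ge K_{s_0}$ (with strict inequality on a nonzero cone element) for every $s_0\in[0,T)$, which via Theorem \ref{marek} yields $\rho(K_T)>\rho(K_{s_0})$ and therefore $\lambda^T<\lambda^{s_0}$. The natural route is to pull back all problems to the common "window" $[s_0,2T+s_0]$ via the translation $z(t)=w(T+t+s_0)$, so that only $\mathcal{R}_{s_0}$ depends on $s_0$, and then compare the kernels pointwise. When $s_0=T$, the reflection axis coincides with an interior integer, and the piecewise-constant projection becomes symmetric around $0$; this extra symmetry should make the kernel dominate the others. A complementary tactic, useful as a fallback, is an analytic-perturbation argument: $K_{s_0}$ depends analytically on $s_0$ between consecutive integer break-points, so $\lambda^{s_0}$ is piecewise analytic, and one can try to show that at each break-point the eigenvalue continues with a nonpositive right-derivative, combined with analyticity on each subinterval to rule out interior minima.

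The main obstacle I expect is precisely this pointwise kernel comparison: the operators $K_{s_0}$ are genuinely different objects (the domain of integration of the cell-averaging operator shifts with $s_0$, and the reflection base-point also moves), so no obvious termwise monotonicity is available. If the direct comparison fails, I would split the range of $s_0$ into $[0,T/2]$ and $[T/2,T]$ and exploit the $s_0\leftrightarrow T-s_0$ symmetry coming from item 7 of Proposition \ref{carah}, reducing to a one-sided monotonicity statement. Once the minimum at $s_0=T$ is secured, rewriting the translated problem at $s_0=T$ in the original variables yields \eqref{ausim}, and combining this with the "only if" direction already built into Theorem \ref{teoautovalor} completes the characterization of positivity asserted by the conjecture.
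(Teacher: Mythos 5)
First, note that the statement you are proving is labelled as a \emph{conjecture} in the paper: the authors offer no proof for general $T$, only the explicit computation of Corollary \ref{coltmenor} for $T\le 1$ and numerical evidence beyond that. Your proposal correctly identifies the right reduction -- by Theorem \ref{teoautovalor} the critical $M$ is $\min_{s_0\in[0,T]}\lambda^{s_0}$, and the translated problem at $s_0=T$ is exactly \eqref{ausim} -- so the whole content of the conjecture is the single claim $\min_{s_0}\lambda^{s_0}=\lambda^{T}$. This matches what the paper verifies symbolically for $T\le 1$ and for $m=0$, $T\le 5$. But your proposal does not prove that claim; it only names it as ``the key and hardest step'' and acknowledges that the comparison may fail. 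As written, this is a research plan, not a proof.

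Two concrete gaps. First, the recasting $u=\lambda^{s_0}K_{s_0}u$ with $K_{s_0}=Gd_T\circ\mathcal{R}_{s_0}$ is incorrect when $m>0$: the eigenvalue parameter in Theorem \ref{teoautovalor} is $M$ alone, which multiplies only the piecewise-constant part of the right-hand side of \eqref{znoentero}, while $m$ is held fixed. The correct formulation is a generalized eigenvalue problem $(I-m\,Gd_T R_{s_0})z=M\,Gd_T P_{s_0}z$, and to extract a positive compact operator you must first invert $(I-m\,Gd_T R_{s_0})$ and prove that this inverse is positivity-preserving (which itself requires $m$ below a reflection-Dirichlet eigenvalue). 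The paper's operator-theoretic argument (Theorems \ref{sawa} and \ref{marek}) is deployed only for the pure piecewise-constant problem \eqref{zsinxelo}, i.e.\ $m=0$, precisely because this complication is absent there. Second, even for $m=0$, the comparison $K_T\ge K_{s_0}$ needed to invoke Theorem \ref{marek} requires pointwise domination of kernels; but the kernels for different $s_0$ are rearrangements of one another obtained by shifting the reflection base-point and the cell decomposition, so they redistribute the same mass rather than dominate each other, and there will generically be points where $K_{s_0}$ exceeds $K_T$. Your fallback suggestions (piecewise analyticity in $s_0$, symmetry splitting) are plausible directions but are likewise not carried out. Unless and until the monotonicity of $s_0\mapsto\lambda^{s_0}$ is actually established, the statement remains a conjecture, which is exactly how the paper leaves it.
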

\end{remark}

%
%
%

Having analyzed and characterized the regions of constant sign of the Green's function $H_{m,M}$, we now proceed to study the existence of solutions for the corresponding nonlinear problem.

\section{Existence of solution to nonlinear problems}
\label{monotono}
In this section, we will present some results that allow us to deduce the existence of solutions for nonlinear problems with periodic boundary conditions. Knowing the regions where the Green's function $H_{m,M}$ has a constant sign enables the direct application of various fixed-point theorems, such as the monotone method or Krasnosel'skii's theorem, among many others. Here, we focus on Krasnosel'skii's method.

\subsection{Krasnosel'skii's fixed-point method}

In this section, we will adapt the methods used in \cite{krasnoselskii1964positive} to prove the existence of solutions to differential equations, demonstrating new existence results for Problem 
\begin{equation}
	v''(t)=f(t,v(t), v(-t),v([t]) \quad \textup{a.e. } \quad t \in I, \quad v(-T)=v(T), \, v'(-T)=v'(T),
	\label{nolinear2}
\end{equation}
where $f:I \times \mathbb{R} \times \mathbb{R} \times \mathbb{R} \rightarrow \mathbb{R}$ is $2 \, T$-periodic in $t$ and a Carathéodory function. That is,

\begin{itemize}
	\item For a.e. $t \in I$ the function $(x,y,z) \in \mathbb{R} \times \mathbb{R} \rightarrow f(t,x,y,z) \in \mathbb{R}$ is continuous.
	\item For all $(x,y,z) \in \mathbb{R} \times \mathbb{R} \times \mathbb{R}$, the function $t \in I \rightarrow f(t,x,y,z)$ is Lebesgue-measurable.
	\item For all $R>0$, there exists $h_{R} \in \mathcal{L}^1(I)$ such that, if $|x|<R$ and $|y|<R$ then
	\begin{equation*}
		|f(t,x,y,z)| \leq h_{R}(t) \textup{ a.e }t \in I.
	\end{equation*}
\end{itemize}

%
%
%

We now state the fixed-point theorem that we will use \cite{krasnoselskii1964positive}.

\begin{theorem}
(Krasnosel'skii) Let $\mathcal{B}$ be a Banach space, and let $\mathcal{P} \subset \mathcal{B}$ be a cone in $\mathcal{B}$. Assume $\Omega_{1}$, $\Omega_{2}$ are open subsets of $\mathcal{B}$ with $0 \in \Omega_{1}$, $\Omega_{1} \subset \Omega_{2}$, and let $\mathcal{T}: \mathcal{P} \cap (\Omega_{2} \setminus \Omega_{1}) \rightarrow \mathcal{P}$ be a compact operator such that one of the following conditions is satisfied:

\begin{itemize}
\item[$(K_1)$]  $\quad \norm{\mathcal{T}v} \leq \norm{v}$ if $v \in \mathcal{P} \cap \partial \Omega_{1}$ and $\norm{\mathcal{T}v} \geq \norm{v}$ if $v \in \mathcal{P} \cap \partial \Omega_{2}$.
\label{cond1}
\item[$(K_2)$] $\quad \norm{\mathcal{T}v} \geq \norm{v}$ if $v \in \mathcal{P} \cap \partial \Omega_{1}$ and $\norm{\mathcal{T}v} \leq \norm{v}$ if $v \in \mathcal{P} \cap \partial \Omega_{2}$.
\label{cond2}
\end{itemize}

Then, $\mathcal{T}$ has, at least, one fixed-point at $\mathcal{P} \cap (\overline{\Omega_{2}} \setminus \Omega_{1})$.
\label{kranosel}
\end{theorem}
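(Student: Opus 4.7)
The plan is to derive the theorem from the fixed-point index $i_{\mathcal{P}}$ for compact self-maps of the cone $\mathcal{P}$, which satisfies the standard normalization, homotopy invariance, additivity, and solution properties. Throughout I may assume that $\mathcal{T}$ has no fixed points on $\mathcal{P}\cap(\partial\Omega_1\cup\partial\Omega_2)$, since otherwise there is nothing left to prove. The argument then reduces to two local index computations joined by excision, and I will write out case $(K_1)$; case $(K_2)$ is completely symmetric.

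\emph{Inner computation.} For $v\in\mathcal{P}\cap\partial\Omega_1$ the hypothesis gives $\norm{\mathcal{T}v}\le\norm{v}$. Consider the straight-line homotopy $H_1(t,v)=t\,\mathcal{T}v$, $t\in[0,1]$. If $H_1(t,v)=v$ with $v\in\mathcal{P}\cap\partial\Omega_1$, then $\norm{v}=t\norm{\mathcal{T}v}\le t\norm{v}$, forcing either $v=0$ (impossible since $0\in\Omega_1$ is an interior point) or $t=1$ (contradicting the no-fixed-point assumption). Hence $H_1$ is admissible, and by normalization and homotopy invariance
\begin{equation*}
i_{\mathcal{P}}(\mathcal{T},\mathcal{P}\cap\Omega_1)=i_{\mathcal{P}}(0,\mathcal{P}\cap\Omega_1)=1.
\end{equation*}

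\emph{Outer computation.} For $v\in\mathcal{P}\cap\partial\Omega_2$ we have $\norm{\mathcal{T}v}\ge\norm{v}$. Fix any $u_0\in\mathcal{P}\setminus\{0\}$ and set $H_2(t,v)=\mathcal{T}v+t\,u_0$. Compactness of $\mathcal{T}$ on $\mathcal{P}\cap\overline{\Omega_2}$ gives a uniform bound $\norm{\mathcal{T}v}\le R$, so for $t$ larger than $(R+\sup_{w\in\partial\Omega_2}\norm{w})/\norm{u_0}$ the map $H_2(t,\cdot)$ has no fixed points in $\mathcal{P}\cap\overline{\Omega_2}$. The delicate part is ruling out fixed points of $H_2(t,\cdot)$ on $\mathcal{P}\cap\partial\Omega_2$ for intermediate $t>0$: if $\mathcal{T}v+t\,u_0=v$ there, then $v$ dominates $\mathcal{T}v$ in the cone order while simultaneously $\norm{\mathcal{T}v}\ge\norm{v}$, and this forces $tu_0=0$, a contradiction. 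Once admissibility is secured, homotopy invariance yields $i_{\mathcal{P}}(\mathcal{T},\mathcal{P}\cap\Omega_2)=0$.

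\emph{Conclusion.} Using the implicit strengthening $\overline{\Omega_1}\subset\Omega_2$ of the stated inclusion, additivity of the index gives
\begin{equation*}
i_{\mathcal{P}}\!\left(\mathcal{T},\mathcal{P}\cap(\Omega_2\setminus\overline{\Omega_1})\right)=i_{\mathcal{P}}(\mathcal{T},\mathcal{P}\cap\Omega_2)-i_{\mathcal{P}}(\mathcal{T},\mathcal{P}\cap\Omega_1)=-1\neq 0,
\end{equation*}
so the solution property produces a fixed point of $\mathcal{T}$ in $\mathcal{P}\cap(\overline{\Omega_2}\setminus\Omega_1)$. The main obstacle is the outer index computation: because the index is defined on the cone rather than on the ambient Banach space, one cannot freely translate by $-u_0$ to collapse the homotopy, and admissibility on $\partial\Omega_2$ must be extracted from the cone order together with the boundary norm inequality. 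Everything else is bookkeeping with the standard index axioms.
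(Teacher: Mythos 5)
First, note that the paper does not prove this statement at all: it is Krasnosel'skii's classical cone compression--expansion theorem, quoted verbatim with a citation to \cite{krasnoselskii1964positive}, so there is no in-paper argument to compare yours against. Your index-theoretic route is the standard one for this result, and your inner computation is correct: the homotopy $H_1(t,v)=t\,\mathcal{T}v$ is admissible because $v=t\,\mathcal{T}v$ on $\mathcal{P}\cap\partial\Omega_1$ gives $\norm{v}=t\norm{\mathcal{T}v}\le t\norm{v}$, and $0\notin\partial\Omega_1$.

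The outer computation, however, contains a genuine gap, and it is exactly at the step you yourself call ``the delicate part.'' You claim that $v=\mathcal{T}v+t\,u_0$ with $v\in\mathcal{P}\cap\partial\Omega_2$ and $t>0$ is impossible because ``$v$ dominates $\mathcal{T}v$ in the cone order while simultaneously $\norm{\mathcal{T}v}\ge\norm{v}$, and this forces $t\,u_0=0$.'' This is a non sequitur. The relation $v-\mathcal{T}v=t\,u_0\in\mathcal{P}$ does give $v\ge\mathcal{T}v$ in the cone order, but in a general Banach space with a general cone the order relation carries no norm information: $v\ge\mathcal{T}v$ is perfectly compatible with $\norm{\mathcal{T}v}\ge\norm{v}$ (and even if the norm were monotone on the cone, you would only conclude $\norm{v}=\norm{\mathcal{T}v}$, which still does not force $t\,u_0=0$; think of the sup norm). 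The translation homotopy $\mathcal{T}v+t\,u_0$ is the right tool for the \emph{order} form of the theorem, where the outer hypothesis is $\mathcal{T}v\not\le v$ on $\partial\Omega_2$; for the \emph{norm} form stated here, the standard admissible outer homotopy is the scaling $\lambda\,\mathcal{T}v$, $\lambda\ge1$, whose admissibility does follow from the norm inequality ($\lambda\,\mathcal{T}v=v$ gives $\norm{v}=\lambda\norm{\mathcal{T}v}\ge\lambda\norm{v}$, hence $\lambda=1$ and a forbidden fixed point), followed by a separate argument that the index vanishes for $\lambda$ large. As written, your proof of $i_{\mathcal{P}}(\mathcal{T},\mathcal{P}\cap\Omega_2)=0$ does not go through. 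Two smaller points: the index computations on $\Omega_1$ and $\Omega_2$ require first extending $\mathcal{T}$ from the annulus to all of $\mathcal{P}\cap\overline{\Omega_2}$ (a Dugundji extension into the convex set $\mathcal{P}$), and the argument needs $\Omega_2$ bounded and $\overline{\Omega_1}\subset\Omega_2$ --- you correctly flag the latter as an implicit strengthening of the stated hypotheses, but the former is also silently assumed when you invoke a uniform bound on $\norm{\mathcal{T}v}$ and on $\sup_{w\in\partial\Omega_2}\norm{w}$.
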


From now on, we will consider $m \in \mathbb{R}$, $M \in \mathbb{R}$, and $H_{m,M}$ as the Green's function related to the Problem

\begin{equation*}
\begin{aligned}
v''(t)+m\,v(-t)+Mv([t])=\sigma(t)&, \textup{ a.e. }t \in I \\
 v(-T)=v(T), \quad v'(T)=v'(-T)&,
\end{aligned}
\end{equation*}
where $T>0$, $m \in \mathbb{R}$ and $M \in \mathbb{R}$.

Let $L=\max{\{H_{m,M}(t,s): (t,s) \in I \times I\}}$ and $l=\min{\{H_{m,M}(t,s):(t,s) \in I \times I\}}$.

\begin{theorem}
Let $(m,M)$ be a pair such that the Green's function $H_{m,M}$ is positive on $I \times I$. Let us assume that there exist $r,R \in \mathbb{R}^{+}$, $r<R$, such that
\begin{equation}
f(t, x, y, z) + my + Mz \geq 0, \quad \forall \, x, y, z \in 
\left[ \frac{l}{L} r, \frac{L}{l} R \right], \textup{ a.e. } t \in I.
\label{primkra}
\end{equation}
Then, if any of the following conditions is satisfied:
\begin{enumerate}
\item 
\begin{align*}
f(t,x,y,z) + my + Mz &\geq \frac{L}{2Tl^{2}}\,x, \quad \forall x,y,z \in \left[\frac{l}{L}r, r \right], \, \, \, \textup{ a.e.. } t \in I, \\ 
f(t,x,y,z) + my + Mz &\leq \frac{1}{2TL}\,x, \quad \forall x,y,z \in \left[R, \frac{L}{l}R \right], \textup{ a.e. } t \in I,
\end{align*}
\item 
\begin{align*}
f(t,x,y,z) + my + Mz &\leq \frac{1}{2TL}\,x, \quad \forall x,y,z \in \left[\frac{l}{L}r, r \right], \, \, \, \textup{ a.e. } t \in I, \\ 
f(t,x,y,z) + my + Mz &\geq \frac{L}{2Tl^{2}}\,x, \quad \forall x,y,z \in \left[R, \frac{L}{l}R \right], \textup{ a.e.. } t \in I,
\end{align*}
\end{enumerate}
then Problem \eqref{nolinear2} has one positive solution on $I$.
\label{kraprincipal}
\end{theorem}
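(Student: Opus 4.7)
The plan is to recast Problem \eqref{nolinear2} as a fixed-point equation by means of the Green's function $H_{m,M}$ and then apply Theorem \ref{kranosel}. First, I absorb the linear part of the equation into the nonlinearity by setting
$$\tilde f(t,x,y,z) := f(t,x,y,z) + m\,y + M\,z,$$
so that $v$ solves \eqref{nolinear2} if and only if $v''(t)+m\,v(-t)+M\,v([t])=\tilde f(t,v(t),v(-t),v([t]))$ together with the periodic conditions; equivalently, $v=\mathcal T v$, where
$$(\mathcal T v)(t) := \int_{-T}^{T} H_{m,M}(t,s)\,\tilde f\bigl(s,v(s),v(-s),v([s])\bigr)\,ds.$$
Hypothesis \eqref{primkra} makes $\tilde f\geq 0$ throughout $[(l/L)r,\,(L/l)R]$, which combined with $H_{m,M}>0$ forces $\mathcal T v\geq 0$ on the annular region we will use.

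Next, I work in $\mathcal B=C(I)$ with the sup norm, on the cone
$$\mathcal P := \Bigl\{v\in C(I) : v\geq 0,\ \min_{t\in I} v(t)\geq \tfrac{l}{L}\,\|v\|_\infty\Bigr\},$$
adapted to the estimate $0<l\leq H_{m,M}(t,s)\leq L$. A direct computation gives
$$l\int_{-T}^{T}\tilde f(s,\ldots)\,ds \;\leq\; (\mathcal T v)(t) \;\leq\; L\int_{-T}^{T}\tilde f(s,\ldots)\,ds,$$
so $\min \mathcal T v\geq (l/L)\|\mathcal T v\|_\infty$ and $\mathcal T(\mathcal P)\subseteq \mathcal P$. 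Continuity and compactness on bounded subsets follow from the Carathéodory hypotheses on $f$ (with dominator $h_R$), continuity of the kernel $H_{m,M}$ on $I\times I$ (Proposition \ref{carah}), and the Arzelà–Ascoli theorem. A key observation is that for any $v\in\mathcal P$ one has $v(s)\in[(l/L)\|v\|_\infty,\,\|v\|_\infty]$ for \emph{every} $s\in I$, and hence the same inclusion holds at $-s$ and at $[s]$; this is what makes the reflected and piecewise-constant arguments compatible with the pointwise hypotheses on $\tilde f$.

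The crux is the choice of radii: take
$$\Omega_1 := \{v : \|v\|_\infty < r\},\qquad \Omega_2 := \{v : \|v\|_\infty < (L/l)R\}.$$
Then on $\mathcal P\cap\partial\Omega_1$ each of $v(t),v(-t),v([t])$ lies in $[(l/L)r,\,r]$, and on $\mathcal P\cap\partial\Omega_2$ each lies in $[R,\,(L/l)R]$, matching the intervals in the statement. In Case 1, at $\|v\|_\infty=r$ the lower bound $\tilde f \geq \tfrac{L}{2Tl^{2}}x$ and $v(s)\geq (l/L)r$ yield
$$(\mathcal T v)(t) \;\geq\; l\cdot\frac{L}{2T l^{2}}\cdot 2T\cdot\frac{l}{L}\,r \;=\; r \;=\; \|v\|_\infty,$$
while at $\|v\|_\infty=(L/l)R$ the upper bound $\tilde f \leq \tfrac{1}{2TL}x$ and $v(s)\leq (L/l)R$ give
$$(\mathcal T v)(t) \;\leq\; L\cdot\frac{1}{2TL}\cdot 2T\cdot\frac{L}{l}\,R \;=\; \frac{L}{l}R \;=\; \|v\|_\infty,$$
which is exactly $(K_2)$. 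Case 2 is symmetric, swapping the inner and outer boundaries to produce $(K_1)$. Theorem \ref{kranosel} then supplies a fixed point $v^{*}\in \mathcal P\cap(\overline{\Omega_2}\setminus\Omega_1)$, which is a positive solution of \eqref{nolinear2}.

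The main obstacle will be the compactness of $\mathcal T$ in the presence of the piecewise-constant argument $v([t])$, since $v\mapsto v([\cdot])$ is only bounded, not continuous, from $C(I)$ to $C(I)$; the remedy is that the outer integration against the continuous kernel $H_{m,M}$ smooths this out and, together with the $L^1$-domination $|\tilde f(s,\ldots)|\leq h_R(s)+(|m|+|M|)\|v\|_\infty$, produces an equicontinuous family. The other delicate point is the bookkeeping of intervals: the reason $\Omega_2$ must have radius $(L/l)R$ rather than $R$ is precisely so that the cone realizes the interval $[R,(L/l)R]$ on its boundary, aligning the argument ranges with the hypotheses of the theorem.
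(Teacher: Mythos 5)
Your proposal is correct and follows essentially the same route the paper intends: the paper's proof consists of the single remark that it ``follows directly the approach of \cite{torres2003existence},'' and your argument --- rewriting the problem as $v=\mathcal{T}v$ with kernel $H_{m,M}$, using the cone $\{v\ge 0:\ \min v\ge (l/L)\|v\|_\infty\}$ with radii $r$ and $(L/l)R$, and checking $(K_1)$/$(K_2)$ of Theorem \ref{kranosel} via the bounds $l\le H_{m,M}\le L$ --- is precisely that method, with the constants verifying exactly. Your treatment of the piecewise-constant argument (continuity of $v\mapsto v([\cdot])$ into $L^\infty$ suffices for compactness of $\mathcal{T}$) is also sound.
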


The proof follows directly the approach of \cite{torres2003existence}.

The following corollary of Theorem \ref{kraprincipal} is obtained by making the variable change $\hat{v}(t)=-v(t)$.

\begin{corollary}
Let $m$ and $M$ be a pair of values $(m,M)$ where the Green's function $H_{m,M}$ is positive. Suppose there exist $r, R \in \mathbb{R}^{+}$, $r<R$, such that
\begin{equation*}
f(t,x,y,z)+my+Mz \leq 0, \forall x,y,z \in \left[-\frac{L}{l}R, -\frac{l}{L}r \right], \textup{ a.e. }t \in I.
\end{equation*} 
Then, if any of the following conditions is satisfied,
\begin{enumerate}
\item 
\begin{align*}
f(t,x,y,z)+my+Mz &\leq \frac{L}{2Tl^{2}}\,x \quad \forall x,y,z \in \left[-r,-\frac{l}{L}r \right],  \, \, \, \textup{ a.e. }t \in I, \\ 
f(t,x,y,z)+my+Mz &\geq \frac{1}{2TL}\,x \quad \forall x,y,z \in \left[-\frac{L}{l}R,-R \right], \textup{ a.e. }t \in I, 
\end{align*}
\item 
\begin{align*}
f(t,x,y,z)+my+Mz &\geq \frac{1}{2TL}\,x \quad \forall x,y,z \in \left[-r,-\frac{l}{L}r \right], \, \, \, \textup{ a.e. }t \in I,  \\ 
f(t,x,y,z)+my+Mz &\leq \frac{L}{2Tl^{2}}\,x \quad \forall x,y,z \in \left[-\frac{L}{l}R,-R \right], \textup{ a.e. }t \in I, 
\end{align*}
\end{enumerate}
then Problem \eqref{nolinear2} has a negative solution on $I$.
\label{coaburrida}
\end{corollary}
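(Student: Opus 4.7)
My plan is to reduce the corollary to Theorem \ref{kraprincipal} via the change of variable $\hat{v}(t) := -v(t)$, exactly as the paper suggests. If $v$ is a (negative) solution of Problem \eqref{nolinear2}, then $\hat{v}$ is $C^1$, satisfies the periodic conditions $\hat{v}(-T)=\hat{v}(T)$ and $\hat{v}'(-T)=\hat{v}'(T)$ (since these are linear and homogeneous), and solves
\begin{equation*}
\hat{v}''(t) \;=\; \hat{f}(t,\hat{v}(t),\hat{v}(-t),\hat{v}([t])), \qquad \hat{f}(t,x,y,z) := -\,f(t,-x,-y,-z),
\end{equation*}
with the same Carathéodory regularity as $f$. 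Conversely, if $\hat{v}$ is a positive solution of this reflected problem, then $v=-\hat{v}$ is a negative solution of \eqref{nolinear2}. Therefore, it suffices to check that the hypotheses of the corollary for $f$ translate precisely into the hypotheses of Theorem \ref{kraprincipal} for $\hat{f}$, and then quote that theorem.

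Next I would perform the translation of the sign conditions. For any $x',y',z'$ in an interval $[-B,-A]$ with $0<A<B$, setting $x=-x'$, $y=-y'$, $z=-z'$ places $(x,y,z)$ in $[A,B]$, and a direct computation gives
\begin{equation*}
\hat{f}(t,x,y,z)+m y+M z \;=\; -\bigl(f(t,x',y',z')+m y'+M z'\bigr).
\end{equation*}
Applying this identity with $[A,B]=[lr/L,LR/l]$ converts the standing sign hypothesis $f+my+Mz\le 0$ of the corollary into the nonnegativity hypothesis \eqref{primkra} for $\hat{f}$. Taking $[A,B]=[lr/L,r]$ and $[R,LR/l]$ and using $x=-x'$ on the right-hand sides turns the pair of linear bounds of the corollary's condition (1) into the pair of bounds in condition (1) of Theorem \ref{kraprincipal} (for $\hat{f}$), and analogously for condition (2). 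Crucially, the constants $L$ and $l$ are intrinsic to $H_{m,M}$ and hence unchanged by the transformation, so the thresholds $\tfrac{L}{2Tl^2}$ and $\tfrac{1}{2TL}$ match on the nose.

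Once this dictionary is in place, Theorem \ref{kraprincipal} applied to $\hat{f}$ yields a positive solution $\hat{v}$ on $I$, and $v:=-\hat{v}$ is the desired negative solution of \eqref{nolinear2}. The only step that requires any real care is the bookkeeping of the interval endpoints and the direction of the inequalities under $x\mapsto -x$: because multiplication by $-1$ reverses order, the intervals $[lr/L,r]$ and $[R,LR/l]$ get swapped in \emph{orientation} (from $[-r,-lr/L]$ and $[-LR/l,-R]$), while the inequality $\ge$ in Theorem \ref{kraprincipal} becomes $\le$ in the corollary and vice versa; this is what explains why the labels ``(1)'' and ``(2)'' stay paired in the natural way and not crossed. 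I do not anticipate any substantive obstacle beyond this sign and interval accounting, since the Carathéodory property and the periodic boundary conditions are manifestly preserved by the linear involution $v\mapsto -v$.
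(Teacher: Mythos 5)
Your proposal is correct and follows exactly the paper's own route: the paper derives this corollary from Theorem \ref{kraprincipal} solely via the change of variable $\hat{v}(t)=-v(t)$, which is precisely the reduction you carry out (with the sign and interval bookkeeping worked out explicitly and correctly). No gaps.
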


%
%

To conclude, let's provide an example where we can ensure the existence of a solution to a problem using the results derived from Krasnosel'skii fixed-point Theorem.

%
%

\begin{example}
	We consider Problem \eqref{prorixi} under periodic boundary conditions. In this case, we have that
	\begin{equation*}
		f(t,x,y,z)=\frac{\alpha |z|^2x-\mu x+\beta y}{\hbar^2/2m_p}
	\end{equation*}
	and we will choose a pair $(m,M)$ such that 
	 $$m=-\frac{2\beta m_p}{\hbar^2}$$
and the Green's function $H_{m,M}$ related to Problem \eqref{prorixi} is positive (i.e. the pair $\left(\frac{-2\beta m_p}{\hbar^2},M \right)$ satisfies the properties of Theorem \ref{kraprincipal}). Then, according to Theorem \ref{kraprincipal}, if 
	\begin{equation*}
		\frac{\alpha |z|^{2}-\mu}{\hbar^{2}/2m_p}x+M\,z \geq 0 \quad \forall x,z \in \left[\frac{l}{L}r,\frac{L}{l}R\right]
	\end{equation*}
	and one of the following conditions holds
	\begin{enumerate}
		\item 
		\begin{align*}
			\left(\frac{\alpha |z|^{2}-\mu}{\hbar^{2}/2m_p}-\frac{L}{2Tl^{2}}\right)x+M\,z  &\geq 0, \quad \forall x,z \in \left[\frac{l}{L}r, r \right], \\ 
			\left(\frac{\alpha |z|^{2}-\mu}{\hbar^{2}/2m_p}-\frac{1}{2TL}\right)x+M\,z &\leq 0, \quad \forall x,z \in \left[R, \frac{L}{l}R \right],
		\end{align*}
		\item 
		\begin{align*}
			\left(\frac{\alpha |z|^{2}-\mu}{\hbar^{2}/2m_p}-\frac{L}{2Tl^{2}}\right)x+M\,z  &\leq 0, \quad \forall x,z \in \left[\frac{l}{L}r, r \right], \\ 
\left(\frac{\alpha |z|^{2}-\mu}{\hbar^{2}/2m_p}-\frac{1}{2TL}\right)x+M\,z &\geq 0, \quad \forall x,z \in \left[R, \frac{L}{l}R \right],
		\end{align*}
	\end{enumerate}

Problem \eqref{prorixi}, under periodic boundary conditions, has a positive solution on $I$.

If we consider the particular case where 
$$m=-\frac{2\beta m_p}{\hbar^{2}} \leq \left(\frac{\pi}{2 \, T} \right)^2 \textup{ and } M=0,$$ 
then, after some algebraic calculations and taking into account that $m_p$, $\mu$ and $\hbar$ must be positive, we obtain that if:

\begin{equation*}
	-\left(\frac{\pi}{2T} \right)^2 \frac{\hbar^2}{2m_p} \leq \beta \leq 0, \quad \alpha \geq \frac{\mu L^2}{l^2 r^2},
\end{equation*}
and one of the following conditions holds
	\begin{enumerate}
	\item 
	\begin{align*}
	\frac{L^2}{l^2 r^2} \left(\frac{\hbar^2 L}{4m_p T l^2}+\mu \right) \leq \alpha \leq \frac{l^2}{L^2 R^2} \left(\frac{\hbar^2}{4m_p T L}+\mu \right),
	\end{align*}
	\item 
	\begin{align*}
		 \frac{1}{R^2} \left(\frac{\hbar^2}{4m_p T L}+\mu \right) \leq \alpha \leq \frac{1}{ r^2} \left(\frac{\hbar^2 L}{4m_p T l^2}+\mu \right),
	\end{align*}
	for some $r>0$ and $R>0$ then, Problem \eqref{prorixi} under periodic boundary conditions has a positive solution on $I$.
\end{enumerate}

We could proceed in a similar way by applying Corollary \ref{coaburrida} to ensure the existence of a negative solution.

\end{example}

\section*{Appendix}

\subsection*{Numerical search for the maximum and the minimum of function $H_{m,M}$}
\label{apend}
\addcontentsline{toc}{section}{Appendix}

To infer the point $(\hat{t},\hat{s}) \in I \times I$ where the function $H_{m,M}$ reaches its minimum (maximum) when it is positive (negative), we developed a Python program to search, for different pairs of values $(m,M)$, the point where the function reaches its minimum (or maximum) when $m \geq 0$. We then plot the corresponding points for these values $(m,M)$. The resulting graph is shown in Figure \ref{figminmax}.

\begin{figure}[H]
\includegraphics[scale=.6, trim={0 8cm 0 7.6cm}, clip]{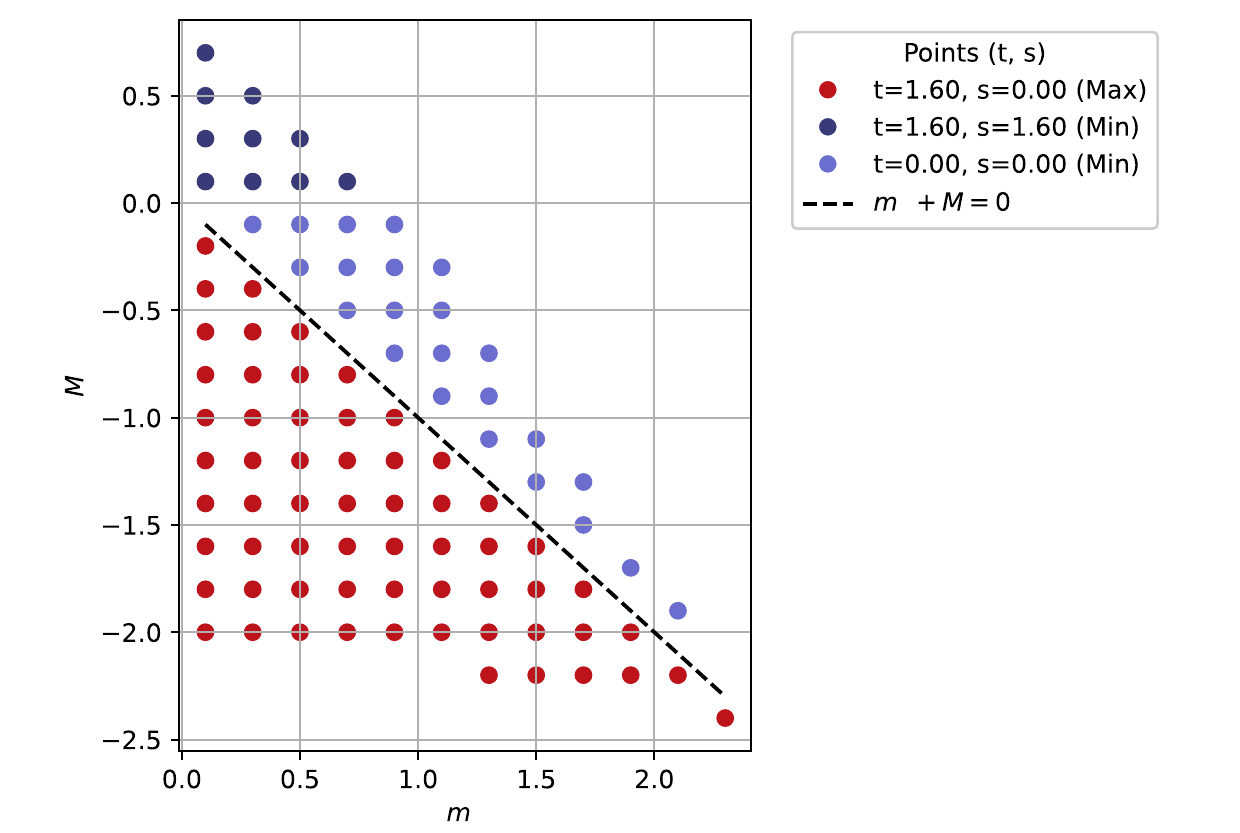}
%
%
\caption{Representation as a function of the values of $m$ and $M$ of the points where the function $H_{m,M}$ reaches its minimum (when positive) or maximum (when negative).}
\label{figminmax}       
\end{figure}

The maximum of the function $H_{m,M}$ seems to always occur at $(T,0) \in I \times I$, while the minimum, when the function is positive, occurs at $(T,T) \in I \times I$ or at $(0,0) \in I \times I$. By setting $H_{m,M}(T,T)=0$ and $H_{m,M}(0,0)=0$, and solving for $M$ as a function of $m$ we can plot both curves. We observe that for $m \in \left(0, \left(\frac{\pi}{2T}\right)^2\right)$, the function $H_{m,M}(T,T)$ is no longer positive for values of $M$ smaller than those for $H_{m,M}(0,0)$, and when $m \in \left(\left(\frac{\pi}{2T}\right)^2, \left(\frac{\pi}{2}\right)^2\right)$, the opposite happens. Therefore, we must choose the point $(T,T)$ when $m \in \left(0, \left(\frac{\pi}{2T}\right)^2\right)$ and $(0,0)$ when $m \in \left(\left(\frac{\pi}{2T}\right)^2, \left(\frac{\pi}{2}\right)^2\right)$.

A similar program was implemented for $m<0$.

\subsection*{Wolfram Mathematica code for the first Dirichlet eigenvalue}
\label{code}

Here we provide the Mathematica code required to symbolically compute the first Dirichlet eigenvalue of Problem \eqref{znoentero} with $m=0$ for $s_0 \in [0,T] \setminus \mathbb{N}$, and of Problem \eqref{zentero} with $m=0$ for $s_0 \in \mathbb{N} \cup [0,T]$.

\begin{lstlisting}[language=Python,caption={Mathematica code},label={lst:mathematicaM}]
	sSymbol = Symbol["s"];
	tSymbol = Symbol["TT"];
	n = 2*IntegerPart[T] + 2;
	Msols = ConstantArray[Function[{s, TT}, 0], IntegerPart[T] + 1];
	Soloxala = ConstantArray[0, IntegerPart[T] + 1];
	
	Do[
	With[{aFixed = a},
	A = Table[Symbol["A" <> ToString[i]], {i, 1, n}];
	B = Table[Symbol["B" <> ToString[i]], {i, 1, n}];
	v = ConstantArray[Function[{t, M}, 0], n];
	
	(* Define the first and last functions *)
	v[[n]] = Function[{t, M}, 
	-M/2*((A[[n]]*(TT + a - s) + B[[n]])/(1 + M/2*(TT + a - s)^2))*t^2 
	+ A[[n]]*t + B[[n]]
	];
	v[[1]] = Function[{t, M}, 
	-M/2*v[[n]][TT + a - s, M]*t^2 + A[[1]]*t + B[[1]]
	];
	
	(* Forward recursion *)
	Do[
	With[{jFixed = j}, 
	v[[j]] = Function[{t, M}, 
	-M/2*v[[jFixed - 1]][-TT + jFixed - 1 + a - s, M]*t^2 
	+ A[[jFixed]]*t + B[[jFixed]]
	];
	],
	{j, 2, 1 + IntegerPart[T] - a, 1}
	];
	
	(* Middle function *)
	v[[n - a]] = Function[{t, M}, 
	-M/2*((A[[n - a]]*(TT - s) + B[[n - a]])/(1 + M/2*(TT - s)^2))*t^2 
	+ A[[n - a]]*t + B[[n - a]]
	];
	
	(* Backward recursion *)
	Do[
	With[{jFixed = j}, 
	v[[j]] = Function[{t, M}, 
	-M/2*v[[jFixed + 1]][TT + jFixed - n + a - s, M]*t^2 
	+ A[[jFixed]]*t + B[[jFixed]]
	];
	],
	{j, n - a - 1, 2 + IntegerPart[T] - a, -1}
	];
	Do[
	With[{jFixed = j}, 
	v[[j]] = Function[{t, M}, 
	-M/2*v[[jFixed - 1]][TT + jFixed - (n - a) - s, M]*t^2 
	+ A[[jFixed]]*t + B[[jFixed]]
	];
	],
	{j, n - a + 1, n - 1, 1}
	];
	
	(* Derivatives *)
	vtt = Table[D[v[[j]][t, M], t], {j, 1, n}];
	vt = Function[{t, M}, #] & /@ vtt;
	
	(* Build continuity equations *)
	eqs = {};
	Do[
	AppendTo[eqs, v[[j]][-TT + j - s + a, M] == v[[j + 1]][-TT + j - s + a, M]];
	AppendTo[eqs, vt[[j]][-TT + j - s + a, M] == vt[[j + 1]][-TT + j - s + a, M]];
	, {j, 1, IntegerPart[T] - a}
	];
	AppendTo[eqs, v[[IntegerPart[T] + 1 - a]][-s, M] == v[[IntegerPart[T] + 2 - a]][-s, M]];
	AppendTo[eqs, vt[[IntegerPart[T] + 1 - a]][-s, M] == vt[[IntegerPart[T] + 2 - a]][-s, M]];
	
	Do[
	AppendTo[eqs, v[[j]][TT + j - n - s + a, M] == v[[j + 1]][TT + j - n - s + a, M]];
	AppendTo[eqs, vt[[j]][TT + j - n - s + a, M] == vt[[j + 1]][TT + j - n - s + a, M]];
	, {j, IntegerPart[T] + 2 - a, n - 1 - a}
	];
	Do[
	AppendTo[eqs, v[[j]][TT + j - n - s + a + 1, M] == v[[j + 1]][TT + j - n - s + a + 1, M]];
	AppendTo[eqs, vt[[j]][TT + j - n - s + a + 1, M] == vt[[j + 1]][TT + j - n - s + a + 1, M]];
	, {j, n - a, n - 1}
	];
	
	(* Boundary conditions *)
	AppendTo[eqs, v[[1]][-TT, M] == 0];
	AppendTo[eqs, v[[n]][TT, M] == 0];
	
	(* Solve the system *)
	matrizCoeficientes = CoefficientArrays[eqs, Join[A, B]][[2]];
	msimp = Simplify[matrizCoeficientes];
	pros = Det[msimp];
	Sols = Solve[pros == 0, M];
	Soloxala[[a + 1]] = Sols[[1]];
	
	(* Store the solution *)
	If[Length[Sols[[a + 1]]] > 0, 
	Msols[[a + 1]] = Function[{s, TT}, Re[M /. Soloxala[[aFixed + 1]] /. {sSymbol -> s, tSymbol -> TT}]], 
	Msols[[a + 1]] = Function[{s, TT}, Missing["NoSolution"]]
	];
	],
	{a, 0, IntegerPart[T]}
	]
\end{lstlisting}



\end{document}